\newif\ifslide
\theoremstyle{plain}
\newtheorem{theorem}{Theorem}
\newtheorem{theorem}{Theorem}[section]
\newtheorem{lemma}[theorem]{Lemma}
\newtheorem*{theorem*}{Theorem}
\newtheorem{proposition}[theorem]{Proposition}
\newtheorem{definition-lemma}[theorem]{Definition-Lemma}
\newtheorem{red-question}[theorem]{\textcolor{red}{Question}}
\theoremstyle{definition}
\newtheorem{definition}[theorem]{Definition}
\def\ideal#1.{I_{#1}}
\def\ring#1.{\mathcal {O}_{#1}}
\def\Spec{\operatorname {Spec}}
\def\fring#1.{\hat{\mathcal {O}}_{#1}}
\def\proj#1.{\mathbb {P}(#1)}
\def\pr #1.{\mathbb {P}^{#1}}
\def\dpr #1.{\hat{\mathbb {P}}^{#1}}
\def\af #1.{\mathbb A^{#1}}
\def\Hz #1.{\mathbb F_{#1}}
\def\Hbz #1.{\overline{\mathbb F}_{#1}}
\def\fb#1.{\underset #1 {\times}}
\def\rest#1.{\underset {\ \ring #1.} \to \otimes}
\def\au#1.{\operatorname {Aut}\,(#1)}
\def\deg#1.{\operatorname {deg } (#1)}
\def\pic#1.{\operatorname {Pic}\,(#1)}
\def\pico#1.{\operatorname{Pic}^0(#1)}
\def\picg#1.{\operatorname {Pic}^G(#1)}
\def\ner#1.{NS (#1)}
\def\rdown#1.{\llcorner#1\lrcorner}
\def\rfdown#1.{\lfloor{#1}\rfloor}
\def\rup#1.{\ulcorner{#1}\urcorner}
\def\rcup#1.{\lceil{#1}\rceil}
\def\n1#1.{\operatorname {N_1}(#1)}  %Vector space of 1-cycles
\def\cn1#1.{\overline{\operatorname {N^1}(#1)}} %closure
\def\cone#1.{\operatorname {NE}(#1)}     %cone of curves
\def\ccone#1.{\overline{\operatorname {NE}}(#1)}
\def\none#1.{\operatorname {NF}(#1)}
\def\cnone#1.{\overline{\operatorname {NF}}(#1)}
\def\mone#1.{\operatorname {NM}(#1)} %cone of moving curves
\def\cmone#1.{\overline{\operatorname {NM}}(#1)}
\def\coef#1.{\frac{(#1-1)}{#1}}
\def\vit#1.{D_{\langle #1 \rangle}}
\def\mm#1.{\overline {M}_{0,#1}}
\def\H1#1.{H^1(#1,{\ring #1.})}
\def\ac#1.{\overline {\mathbb F}_{#1}}
\def\adj#1.{\frac {#1-1}{#1}}
\def\spn#1.{\overline{#1}}
\def\pek#1.#2.{\Cal P^{#1}(#2)}
\def\plk#1.#2.{\Cal P^{\leq #1}(#2)}
\def\ev#1.{\operatorname{ev_{#1}}}
\def\ilist#1.{{#1}_1,{#1}_2,\dots}
\def\bminv#1.{(\nu_1,s_1;\nu_2,s_2;\dots ;\nu_{#1},s_{#1};\nu_{r+1})}
\def\zinv#1.{(\nu_1,s_1;\nu_2,s_2;\dots ;\nu_{#1},s_{#1};0)}
\def\iinv#1.{(\nu_1,s_1;\nu_2,s_2;\dots ;\nu_{#1},s_{#1};\infty)}
\def\scr #1.{\mathcal #1}
\def\llist#1.#2.{{#1}_1,{#1}_2,\dots,{#1}_{#2}}
\def\ulist#1.#2.{{#1}^1,{#1}^2,\dots,{#1}^{#2}}
\def\lomitlist#1.#2.{{#1}_1,{#1}_2,\dots,\hat {{#1}_i}, \dots, {#1}_{#2}}
\def\lomitlistz#1.#2.{{#1}_0,{#1}_1,\dots,\hat {{#1}_i}, \dots, {#1}_{#2}}
\def\loc#1.#2.{\Cal O_{#1,#2}}
\def\fderiv#1.#2.{\frac {\partial #1}{\partial #2}}
\def\deriv#1.#2.{\frac {d #1}{d #2}}
\def\map#1.#2.{#1 \longrightarrow #2}
\def\rmap#1.#2.{#1 \dasharrow #2}
\def\emb#1.#2.{#1 \hookrightarrow #2}
\def\non#1.#2.{\text {Spec }#1[\epsilon]/(\epsilon)^{#2}}
\def\Hi#1.#2.{\text {Hilb}^{#1}(#2)}
\def\sym#1.#2.{\operatorname {Sym}^{#1}(#2)}
\def\Hb#1.#2.{\text {Hilb}_{#1}(#2)}
\def\Hm#1.#2.{\Hom_{#1}(#2)}
\def\prd#1.#2.{{#1}_1\cdot {#1}_2\cdots {#1}_{#2}}
\def\Bl #1.#2.{\operatorname {Bl}_{#1}#2}
\def\pl #1.#2.{#1^{\otimes #2}}
\def\mgn#1.#2.{\overline {M}_{#1,#2}}
\def\ialist#1.#2.{{#1}_1 #2 {#1}_2, #2\dots}
\def\pair#1.#2.{\langle #1, #2\rangle}
\def\vandermonde#1.#2.{\left|
\begin{matrix}
1 & 1 & 1 & \dots & 1\\
{#1}_1 & {#1}_2 & {#1}_3 & \dots & {#1}_{#2}\\
{#1}_1^2 & {#1}_2^2 & {#1}_3^2 & \dots & {#1}_{#2}^2\\
\vdots & \vdots & \vdots & \ddots & \vdots\\
{#1}_1^{#2-1} & {#1}_2^{#2-1} & {#1}_2^{#2-1} & \dots & {#1}_{#2}^{#2-1}\\
\end{matrix}
\right|
}
\def\vandermondet#1.#2.{\left|
\begin{matrix}
1 & {#1}_1   & {#1}_1^2 & \dots & {#1}_1^{#2-1}\\
1 & {#1}_2   & {#1}_2^2 & \dots & {#1}_2^{#2-1}\\
1 & {#1}_3   & {#1}_3^2 & \dots & {#1}_3^{#2-1}\\
\vdots & \vdots & \vdots & \ddots & \vdots\\
1 & {#1}_{#2}& {#1}_{#2}^2 & \dots & {#1}_{#2}^{#2-1}\\
\end{matrix}
\right|
}
\def\gr#1.#2.{\mathbb{G}(#1,#2)}
\def\alist#1.#2.#3.{{#1}_1 #2 {#1}_2 #2\dots #2 {#1}_{#3}}
\def\zlist#1.#2.#3.{#1_0 #2 #1_1 #2\dots #2 #1_{#3}}
\def\lomitlist30#1.#2.#3.{{#1}_0,{#1}_1 #2 \dots #2\hat {{#1}_i} #2\dots #2 {#1}_{#3}}
\def\lmap#1.#2.#3.{#1 \overset{#2}{\longrightarrow} #3}
\def\mes#1.#2.#3.{#1 \longrightarrow #2 \longrightarrow #3}
\def\ses#1.#2.#3.{0\longrightarrow #1 \longrightarrow #2 \longrightarrow #3 \longrightarrow 0}
\def\les#1.#2.#3.{0\longrightarrow #1 \longrightarrow #2 \longrightarrow #3}
\def\res#1.#2.#3.{#1 \longrightarrow #2 \longrightarrow #3\longrightarrow 0}
\def\Hi#1.#2.#3.{\text {Hilb}^{#1}_{#2}(#3)}
\def\ten#1.#2.#3.{#1\underset {#2}{\otimes} #3}
\def\lomitlist30#1.#2.#3.{{#1}_0 #2 {#1}_1 #2 \dots #2 \hat {{#1}_i} #2 \dots #2 {#1}_{#3}}
\def\mderiv#1.#2.#3.{\frac {d^{#3} #1}{d #2^{#3}}}
\def\Diff{\operatorname{Diff}}
\def\Hom{\operatorname{Hom}}
\def\Proj{\operatorname{Proj}}
\def\Supp{\operatorname{Supp}}
\def\deg{\operatorname{deg}}
\def\rest{\operatorname{res}}
\def\e{\Cal E}
\def\e1{E_1}
\def\e2{E_2}
\def\Q{\mathbb Q}
\def\Z{\mathbb Z}
\def\mapdown#1{\big\downarrow\rlap{$\vcenter{\hbox{$\scriptstyle#1$}}$}}
\def\mapse#1{
{\vcenter{\hbox{$\mathop{\smash{\raise1pt\hbox{$\diagdown$}\!\lower7pt
\hbox{$\searrow$}}\vphantom{p}}\limits_{#1}\vphantom{\mapdown{}}$}}}}
\def\VR#1.{height#1pt&\omit&&\omit&&\omit&&\omit&&\omit&\cr}
\def\VRT#1.{height#1pt&\omit&&\omit&\cr}
\newcommand\redout{\bgroup\markoverwith
{\textcolor{red}{\rule[.5ex]{2pt}{0.4pt}}}\ULon}
\title[Global F-regularity for log del Pezzo surfaces]
{Klt del Pezzo surfaces which are not globally F-split} 
\author{Paolo Cascini, Hiromu Tanaka, Jakub Witaszek} 
\subjclass[2010]{14E30, 13A35.}
\keywords{Log del Pezzo surface, $F$-singularity, positive characteristic}
\address{Department of Mathematics, Imperial College, London, 180 Queen's Gate, 
London SW7 2AZ, UK} 
\email{p.cascini@imperial.ac.uk}
\email{h.tanaka@imperial.ac.uk}
\email{j.witaszek14@imperial.ac.uk}
\newtheorem{step}{Step}
\def\vdots{\vbox{\baselineskip=6pt \lineskiplimit=0pt 
\kern6pt \hbox{.}\hbox{.}\hbox{.}}} 
\thanks{All three authors  were funded by  EPSRC}
\begin{document}

\maketitle

\begin{abstract} We construct a klt del Pezzo surface which is not globally $F$-split, over any algebraically closed field of positive characteristic.
\end{abstract}

\tableofcontents

\setcounter{section}{0}

\section{Introduction}
It has been clear for a long time that the right category of singularities to work with, in trying to run the minimal model programme in characteristic zero, is the category of Kawamata log terminal pairs  -- klt, for short.

The main difficulty in trying to extend the classic results in minimal model programme, such as the base point free theorem and the cone theorem,  to any projective variety defined over an algebraically closed field of positive characteristic is the fact that the Kawamata--Viehweg vanishing theorem does not hold in this generality. On the other hand, over the last few years, many methods have been developed to use the Frobenius morphism to lift sections, as a replacement of the vanishing theorems. As a consequence,  $F$-singularities are more suitable singularities to work with in terms of generalising some of the results in birational geometry to varieties defined over a field of positive characteristic. These singularities were introduced in \cite{hh90} to develop new techniques in commutative algebra.

The recent progress in the field has been sparked by the introduction of strongly $F$-regular singularities
 and globally $F$-regular varieties (\cite{smith00} and \cite{schwedesmith10}). Those notions are pereceived as correct counterparts of klt singularities and klt Fano varieties, respectively.
% \redout{, and their properties allow for the proof of statements which classically, in characteristic zero, follow from vanishing theorems}\footnote{Jakub: I think the same thing is written in the paragraph above}. 
% Unfortunately, in general, strongly $F$-regular singularities are not
%preserved by the minimal model programme.
%In the case of two-dimensional singularities, Hara showed that over an algebraically closed field of characteristic ${p>5}$, being strongly $F$-regular coincides with being klt, under the assumption that the boundary divisor  admits  standard coefficients (\cite[Theorem 1.1]{hara98}\footnote{Tanaka: "two dim klt with standard coeff in $p>5$ is SFR" is not given by [Hara98a, Ex 5.5]. (It states that sm del Pezzo is GFR if $p>5$) 
%But I do not know where we can find it (at least, of course, Hacon-Xu shows it, but it may be more classical).\\
%Paolo: I changed the reference.  }). 
For example, Watanabe showed that over an algebraically closed field of characteristic $p>5$, 
any one dimensional log Fano pair $(X, \Delta)$ with standard coefficients is globally $F$-regular 
\cite[Theorem 4.2]{watanabe91}. 
This result played an important role in the  work of Hacon and Xu, who showed 
the existence of minimal models for any projective $\mathbb Q$-factorial terminal threefold with pseudo-effective canonical divisor, defined over an algebraically closed field of characteristic $p>5$ \cite{hx13}.

%After the work of Hacon and Xu, it became clear that if one could show that, for big enough characteristic depending only on the dimension, the aforementioned results of Hara and Watanabe are true in general, then it might be possible to simplify the proof and vastly generalize the result of Hacon and Xu. 
Later on, the first author, together with Gongyo and Schwede, showed that a  two-dimensional klt singularity $(X,\Delta)$ is strongly $F$-regular for big enough characteristic depending only on the coefficients of $\Delta$ \cite{cgs14}.

The goal of this paper is to show that the natural generalization of the results in \cite{hara98a, watanabe91, cgs14} do not hold true even under the assumption that the boundary divisor is trivial  (cf.\ \cite[Question 5.23]{PST14}). Our main result is:
\begin{theorem} \label{Intro-main} Over any  algebraically closed field $k$ of characteristic $p>0$, there exists a projective Kawamata log terminal surface $X$  such that $-K_X$ is ample and $X$ is not globally $F$-split.
\end{theorem}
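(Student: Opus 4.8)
The plan is to construct $X$ explicitly as a low-degree del Pezzo surface with mild (but non-trivial) quotient singularities, and then to show directly that the Frobenius-splitting obstruction cannot vanish. The guiding principle is that global $F$-splitting of a log del Pezzo surface imposes strong numerical constraints coming from the anticanonical polarization together with the local $F$-splitting behaviour at the singular points; by making the singularities ``bad enough'' relative to the smallness of $-K_X^2$ these constraints become contradictory. Concretely, I would first recall the characterization that $X$ is globally $F$-split if and only if the natural map $\ring X. \to F_*\ring X.$ splits, equivalently (via duality) that some section of $F_*((1-p)K_X) \cong \SHom(F_*\ring X., \ring X.)$ is a splitting, i.e. there exists an effective divisor $D \qle (1-p)(-K_X)$ through which $(X, \tfrac{1}{p-1}D)$ is sharply $F$-split in a suitable sense. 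Thus everything reduces to controlling the linear system $|(1-p)K_X|$ and its singularities at the marked points.

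Next I would carry out the construction. The natural candidate is a surface obtained by contracting curves on a blow-up of $\pr 2.$ (or of a Hirzebruch surface) at a cleverly chosen configuration of points, arranging that the resulting $X$ has Picard number one, anticanonical degree as small as possible, and a single cyclic quotient singularity (or a small number of them) whose local type has a large order $n$. The key numerical input is that for a cyclic quotient singularity of type $\tfrac{1}{n}(1,a)$ the local Frobenius splitting ideal behaves like a monomial ideal, and the ``size'' of the divisor one is allowed to put at the singularity in any global $F$-splitting is bounded by something like $\tfrac{n-1}{n}$ or a related quantity; meanwhile the global section of $(1-p)(-K_X)$ that would furnish the splitting is forced, by $K_X^2$ being small and by $\rho(X)=1$, to be a specific small multiple of a fixed curve, which one checks passes through the singular point with too high a multiplicity. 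Balancing these two estimates — the local room available versus the global forced multiplicity — is where the construction must be tuned, and this tuning is where I expect the real work to be. It is also here that one must handle \emph{all} characteristics $p>0$ uniformly: the construction should be set up so that the failure of splitting is witnessed by the same geometric obstruction independent of $p$, e.g. by exhibiting a curve $C \subset X$ with $C \cdot (-K_X)$ and $C^2$ forcing every member of $|{-K_X}|$ (hence, after scaling, of $|(1-p)K_X|$) to be too singular along $C$ or at the singular point.

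With the candidate $X$ in hand, the verification splits into two independent checks. First, that $X$ is klt with $-K_X$ ample: this is a direct computation — resolve the singularities, compute discrepancies of the exceptional divisors (all $> -1$ for quotient singularities, automatically), and check ampleness of $-K_X$ via the Nakai–Moishezon criterion or by exhibiting $-K_X$ as a positive combination on the resolution that stays positive after contraction, using $\rho(X)=1$ to reduce ampleness to $-K_X^2 > 0$ together with $-K_X \cdot C > 0$ for the (finitely many) relevant curves. Second, that $X$ is not globally $F$-split: argue by contradiction from a hypothetical splitting, translate it into an effective $\Q$-divisor $\Delta \qle -K_X$ with $(X,\Delta)$ sharply $F$-split (using that a globally $F$-split variety with $-K_X$ $\Q$-Cartier admits such a ``$F$-splitting boundary''), then derive a contradiction from the local structure at the singular point — the log canonical threshold / $F$-splitting threshold at that point is strictly smaller than what the global positivity of $-K_X$ would require $\Delta$ to have there. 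The main obstacle, and the crux of the paper, is engineering the singularity type and the point configuration so that this last inequality is violated \emph{simultaneously for every $p$}; once the surface is correctly chosen, each individual verification is routine.
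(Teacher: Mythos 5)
There is a genuine gap here. Your high-level outline (build a candidate $X$, check klt and ampleness of $-K_X$, show failure of a splitting by contradiction) is of course the right shape for any such proof, but the \emph{mechanism} you propose for obstructing the splitting is not the one that can work, and you leave the actual engine of the proof unidentified. You hope that a single curve $C\subset X$, or a single quotient singularity of large order $n$, produces a numerical tension between local $F$-splitting room and the global positivity of $-K_X$ that fails ``uniformly in $p$.'' This does not happen and should not be expected to: two-dimensional cyclic quotient singularities of order prime to $p$ are strongly $F$-regular, so their local contribution is benign, and the obstruction to global $F$-splitting of a log del Pezzo is not a coarse numerical one. In fact the construction in the paper is genuinely $p$-dependent in two ways — the configuration of marked points depends on $p$, and the depth of the blow-up tower depends on a threshold $\beta$ which also depends on $p$ — so a single $p$-independent geometric witness is exactly what one cannot produce.

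What actually drives the proof, and is absent from your plan, is an \emph{arithmetic} obstruction living on a rational curve. The paper first shows (Lemma~\ref{l_P1}, via Watanabe's criterion and a short computation with the coefficient of $x^n$ in $(x+1)^n(x+\mu)^n$) that one can choose four distinct points $Q_1,\dots,Q_4\in\mathbb P^1$ making $(\mathbb P^1,\tfrac12\sum Q_i)$ \emph{not} globally $F$-split; this is a Frobenius-theoretic fact about the associated double cover, not a discrepancy estimate. It then builds a plt log Calabi--Yau pair $(Z,C_Z)$ with $2(K_Z+C_Z)\sim 0$ and $\mathrm{Diff}_{C_Z}=\tfrac12\sum Q_i$, and transfers the non-splitting from $(C_Z,\mathrm{Diff}_{C_Z})$ to $(Z,C_Z)$ via an adjunction/trace-map argument (Lemma~\ref{l_adjunction}). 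To get from the Calabi--Yau pair to a genuine del Pezzo, it blows up enough times along $C$, contracts curves, and contracts the strict transform $C_Y$ to produce $X$ with $K_Y+bC_Y=f^*K_X$ and $b$ as close to $1$ as desired. The crucial technical tool you have no counterpart for is Theorem~\ref{t-fpure-threshold}: the $F$-split threshold is attained, so $(Z,\beta C_Z)$ already fails to be globally $F$-split for some $\beta<1$, and one then chooses the blow-up depth so that $b>\beta$, after which the standard push-forward and crepant-pullback lemmas transfer non-splitting to $X$. Without this threshold theorem there is no way to replace the coefficient-$1$ boundary by a strictly sub-boundary and still keep the obstruction, and without the marked-$\mathbb P^1$ input there is no obstruction at all. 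Your proposal, as written, would leave you with a candidate surface and no way to complete the contradiction.
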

 Note that, if $X$ is a smooth del Pezzo surface  over an algebraically closed field of characteristic  $p>5$, then $X$ is globally $F$-regular \cite[Example 5.5]{hara98a}.
In \cite{CTW15b}, we show that any Kawamata log terminal del Pezzo surface over an algebraically closed field of large characteristic which is not globally $F$-regular, admits a log resolution which is liftable to characteristic zero. 
\medskip 

By considering the cone over a klt del Pezzo surface which is not globally $F$-split, as in Theorem \ref{Intro-main},
we obtain a three-dimensional klt singularity in arbitrary characteristic which is not $F$-pure. 
On the other hand, we give a direct and simpler construction of such singularities. In particular, the varieties that we construct, admit only  canonical hypersurface singularities. 

\begin{theorem}\label{Intro-main2}
Over any  algebraically closed field $k$ of characteristic $p>0$,
there exists a three-dimensional canonical variety $X$, which is not $F$-pure. 
\end{theorem}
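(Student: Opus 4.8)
The plan is to exhibit $X$ very concretely as an affine hypersurface $X = \{f = 0\} \subset \mathbb{A}^4$, with $f \in k[x,y,z,w]$ an explicit polynomial (whose shape, in particular the exponents occurring in it, will have to depend on $p$), and then to verify the two required properties: that $X$ has canonical singularities, and that $X$ is not $F$-pure. I would look for $f$ among equations of affine cones over del Pezzo surfaces of small degree that admit weighted projective hypersurface models — e.g.\ $X = \{w^2 = g(x,y,z)\}$, the cone over a del Pezzo surface of degree $2$ (a double cover of $\mathbb{P}^2$ branched along $\{g=0\}$), or equations of the form $X = \{w^2 = z^3 + g(x,y)\}$, cones over del Pezzo surfaces of degree $1$ — because for such surfaces $S$ the anticanonical section ring $\bigoplus_m H^0(S,-mK_S)$ is genuinely a hypersurface ring in four variables. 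The del Pezzo surface $S$ occurring here should be taken to be a variant of the one produced in Theorem~\ref{Intro-main} (adapted so as to be a weighted projective hypersurface), so that $S$ fails to be globally $F$-split in every characteristic.

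Failure of $F$-purity would be checked by Fedder's criterion: $X$ is not $F$-pure at the origin if and only if $f^{p-1} \in (x^p, y^p, z^p, w^p)$. Expanding $f^{p-1}$ by the multinomial theorem and using that the relevant multinomial coefficients are units modulo $p$ (so no cancellation occurs between distinct monomials), this reduces to a purely combinatorial assertion about which exponent vectors can appear — equivalently, to the statement that a certain system of inequalities involving the quantities $\lfloor (p-1)/a_i \rfloor$, for the exponents $a_i$ appearing in $f$, has no solution with the $a_i$-weighted partition of $p-1$. This is where the $p$-dependence of $f$ is essential: for any fixed equation these inequalities become solvable once $p$ is large (so the corresponding singularity would be $F$-pure for $p \gg 0$, consistently with the fact that a fixed del Pezzo surface has globally $F$-regular reductions modulo $p \gg 0$), so the exponents of $f$ must be allowed to grow with $p$ in just the right way. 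Alternatively one may bypass Fedder and instead invoke the correspondence (Smith, Schwede--Smith) between $F$-purity of the affine cone $C_a(S,-K_S)$ and global $F$-splitting of $S$, and quote Theorem~\ref{Intro-main}; the combinatorial input is essentially the same, made explicit.

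That $X$ has canonical singularities would be verified by a weighted blow-up of $\mathbb{A}^4$ adapted to the weighted-homogeneous structure of $f$: the exceptional divisor of the induced partial resolution of $X$ is isomorphic to $S$, and its discrepancy over $X$ equals $\bigl(\sum_i a_i\bigr) - 1 - \operatorname{wt}(f)$ for the chosen weights $a_i$, which one arranges to be $\ge 0$; over the (Du Val) singularities of $S$ the total space of this partial resolution acquires only compound Du Val singularities along the corresponding rulings, and these are again canonical, so all discrepancies over $X$ are $\ge 0$. One must be somewhat careful here, since for a badly singular $S$ the anticanonical cone is typically only log canonical rather than canonical — so the type of singularities allowed on $S$, together with the polarization used to form the cone, has to be balanced against the requirement that $S$ not be globally $F$-split.

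The main obstacle is precisely this tension. Canonicity of the three-fold is a strong restriction: the naive candidates — Brieskorn-type hypersurfaces $\sum x_i^{a_i}$, products of a Du Val surface singularity with $\mathbb{A}^1$, or cones over fixed or over smooth del Pezzo surfaces — are all $F$-pure once $p \gg 0$, whereas failure of $F$-purity in \emph{every} characteristic forces the equation to depend on $p$ in a genuine way. Threading this needle — producing a $p$-dependent family of equations for which Fedder's inequality always fails while the weighted blow-up always exhibits only canonical singularities — is the heart of the matter; once the family is found, the two verifications themselves (the multinomial/floor-function computation and the discrepancy computation) are essentially bookkeeping.
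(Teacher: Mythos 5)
Your proposal correctly isolates the two verifications that are needed (Fedder's criterion for non-$F$-purity, a discrepancy computation via blow-ups for canonicity), and correctly identifies both that the equation must depend on $p$ and that there is a tension between the two goals. But you stop precisely where the content lies: you never produce an equation, and the place you are looking is a dead end. Affine cones over del Pezzo surfaces cannot yield canonical threefolds of the sort required. A Gorenstein (equivalently, canonical) del Pezzo surface has $K_S^2 \in \{1,\ldots,9\}$ and hence varies in a bounded family, so for $p\gg 0$ every such surface is globally $F$-regular; conversely, the klt del Pezzo surfaces produced in Theorem~\ref{Intro-main} have discrepancies strictly between $-1$ and $0$ along the exceptional curve $C_Y$, so they are not canonical and their anticanonical cones are only klt, never canonical. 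The paper even flags this explicitly in the introduction: the cone over the surface from Theorem~\ref{Intro-main} gives a klt non-$F$-pure threefold singularity, and the point of Theorem~\ref{Intro-main2} is to do better, by a different and more elementary construction.

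The paper's construction is not a cone over a surface at all. For $p>5$, take a homogeneous cubic $f(x,y,z)$ cutting out a \emph{supersingular} elliptic curve in $\mathbb{P}^2$ and set $X = \Spec\,k[x,y,z,w]/(f(x,y,z)+w^p)$. Think of this as a ``suspension'' of the two-dimensional simple elliptic cone singularity $\Spec\,k[x,y,z]/(f)$, which is log canonical but not $F$-pure (Fedder plus supersingularity: $f^{p-1}\in(x^p,y^p,z^p)$); the extra term $w^p$ raises the discrepancies into the canonical range without fixing $F$-purity, because for $n\geq p$ the condition $(f+w^n)^{p-1}\notin(x^p,y^p,z^p,w^p)$ reduces to the same condition on $f$ alone (Lemma~\ref{l-Fpure-check}). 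Canonicity of $X_n := \{f+w^n=0\}$ (Lemma~\ref{l-canonical-check}) is proved by an induction in $n$: the origin has multiplicity $3$, so the ordinary blow-up of the origin is crepant, and its only singular chart is a copy of $X_{n-3}$; the base cases $X_0,X_1$ are smooth and $X_2$ is terminal (cut by a hyperplane to get a $D_4$ du~Val point, then apply inversion of adjunction). For $p\leq 5$ the paper simply takes $x^2+y^3+z^5=0$, i.e.\ the $E_8$ surface singularity times $\mathbb{A}^1$ — exactly one of your ``naive candidates'', which does work in small characteristic. So the missing idea in your writeup is to replace the del Pezzo cone by the elliptic cone, supersingularity playing the role that non-global-$F$-splitness plays in Theorem~\ref{Intro-main}, and to use the $w^n$ suspension (with $n$ chosen to equal $p$) as the mechanism that pushes a log canonical surface singularity up to a canonical threefold while leaving Fedder's obstruction intact.
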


%\begin{remark}
%We can find a hypersurface example. 
%For example, if $p \equiv 2 \mod 3$, then 
%$$X:=\Spec\,k[x, y, z, w]/(x^3+y^3+z^3+w^p)$$
%satisfies the required property. 
%For more details, see the proof of \cite{Intro-main2}.  
%\end{remark}

Theorem~\ref{Intro-main} and Theorem~\ref{Intro-main2} 
are proved in Section~\ref{s-2dim} and Section~\ref{s-3dim}, respectively.

\medskip
\textbf{Acknowledgement: } 
We would like to thank Y. Gongyo, Y. Kawamata, Z. Patakfalvi, A. Sannai, K. Schwede and S. Takagi for many useful discussions and comments. 

\section{Preliminary Results}
\subsection{Notation}

We work over an algebraically closed field $k$ of characteristic $p>0$, 
unless otherwise mentioned. 
We say that a scheme $X$ is {\em of characteristic} $p>0$ if 
the natural morphism $X \to \mathrm{Spec}\,\Z$ factors through $\mathrm{Spec}\,\Z/p\Z$. 
For any prime $p$, we denote by $\Z_{(p)}$ the set of rational numbers whose denominator is not divisible by $p$. 

We say that $X$ is a {\em variety} over an algebraically closed field $k$, if $X$ is an integral scheme which is separated and of finite type over $k$. 
A {\em curve} is a variety of dimension one, and a {\em surface} is a variety of dimension two. 

We refer to \cite{KM98} for the classical definitions of singularities (for example {\it klt, plt, dlt and log canonical}) appearing in the minimal model programme. 
For a $\mathbb Q$-Cartier $\mathbb Q$-divisor $D$ on a normal variety, 
the {\em Cartier index} of $D$ is the smallest positive integer $m$ such that $mD$ is Cartier. 
%Given a scheme $X$ over an algebraically closed field of characteristic $p>0$, we denote by $F\colon X\to X$ the absolute Frobenius morphism.

Given a scheme $X$ of characteristic $p>0$,
we denote by $F\colon X\to X$ the absolute Frobenius morphism, and we say that $X$ is $F$-{\em finite} if  $F_*\mathcal O_X$ is a finitely generated $\mathcal O_X$-module.
We now recall the definitions of some classes of $F$-singularities (e.g.\ see \cite[Definition 3.1]{schwedesmith10}): 

\begin{definition}
Let $X$ be an $F$-finite normal scheme of characteristic $p>0$ 
(for example a normal variety over an algebraically closed field) 
and let $\Delta$ be an effective $\mathbb R$-divisor on $X$. 
\begin{enumerate}
\item{The pair $(X, \Delta)$ is {\em globally $F$-split} if for every $e \in \mathbb Z_{>0}$, 
the natural map  
\[
\mathcal O_X \xrightarrow{F^e} F_*^e\mathcal O_X \xhookrightarrow{\hphantom{F^e}} F_*^e\mathcal O_X(\llcorner (p^e-1)\Delta\lrcorner)
\]
splits as an $\mathcal O_X$-module homomorphism.}
\item{The pair $(X, \Delta)$ is {\em globally sharply $F$-split} if there exists $e \in \mathbb Z_{>0}$, such that 
the natural map  
\[
\mathcal O_X \xrightarrow{F^e} F_*^e\mathcal O_X \xhookrightarrow{\hphantom{F^e}} F_*^e\mathcal O_X(\ulcorner (p^e-1)\Delta\urcorner)
\]
splits as an $\mathcal O_X$-module homomorphism.}
\item{The pair $(X, \Delta)$ is {\em globally $F$-regular} if 
for every effective divisor $E$ on $X$, there exists $e \in \mathbb Z_{>0}$
such that the natural map 
\[
\mathcal O_X \xrightarrow{F^e} F_*^e\mathcal O_X \xhookrightarrow{\hphantom{F^e}} F_*^e\mathcal O_X(\llcorner (p^e-1)\Delta\lrcorner+E)
\]
splits as an $\mathcal O_X$-module homomorphism.}
\item{The pair $(X, \Delta)$ is  {\em $F$-pure} (resp.\ {\em sharply $F$-pure}, {\em strongly $F$-regular}) 
if $(\Spec\,\mathcal O_{X, x}, \Delta|_{\Spec\,\mathcal O_{X, x}})$ is globally $F$-split 
(resp.\ globally sharply F-split, globally $F$-regular) for every point $x\in X$.}
\end{enumerate}
\end{definition}
If the coefficients of $\Delta$ belong to $\mathbb Z_{(p)}$, then being sharply $F$-split coincides with being $F$-split.

\subsection{Preliminaries}
We begin by recalling the following basic result: 

\begin{lemma} \label{l-push-GFS} 
%\cite[Lemma 2.6]{CTW15b}
Let $f\colon X \to Y$ be a proper birational morphism 
of normal varieties. 
Let $\Delta$ be an effective $\Q$-divisor on $X$. 
If $(X, \Delta)$ is globally $F$-split, then so is $(Y, f_*\Delta)$. 
\end{lemma}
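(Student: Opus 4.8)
The plan is to work over the largest open subscheme $V \subseteq Y$ on which $f$ is an isomorphism, and to transport the splitting back to $Y$ using normality. First I would record two standard inputs: since $f$ is proper birational and $X,Y$ are normal, $f_*\mathcal{O}_X = \mathcal{O}_Y$; and, again because $Y$ is normal, the closed set $Z := Y \setminus V$ satisfies $\operatorname{codim}_Y Z \geq 2$, so that, writing $j\colon V \hookrightarrow Y$ for the inclusion, the $S_2$ property of $Y$ gives $j_*\mathcal{O}_V = \mathcal{O}_Y$. Set $\Gamma := f_*\Delta$ (an effective $\mathbb{Q}$-divisor). Over $V$ the morphism $f$ contracts no divisor, so $f$ restricts to an isomorphism $\varphi\colon f^{-1}(V) \xrightarrow{\ \sim\ } V$ carrying $\Delta|_{f^{-1}(V)}$ to $\Gamma|_V$, hence carrying $\mathcal{O}_X(\llcorner (p^e-1)\Delta\lrcorner)|_{f^{-1}(V)}$ to $\mathcal{O}_Y(\llcorner (p^e-1)\Gamma\lrcorner)|_V$ for every $e$.

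Next, for a fixed $e \in \mathbb{Z}_{>0}$ I would take a splitting $\sigma\colon F^e_*\mathcal{O}_X(\llcorner (p^e-1)\Delta\lrcorner) \to \mathcal{O}_X$ of the natural map, provided by the hypothesis. Because the natural maps in the definition are compatible with restriction to open subschemes, $\sigma|_{f^{-1}(V)}$ splits the corresponding map over $f^{-1}(V)$; transporting it along $\varphi$ and using that Frobenius commutes with morphisms yields a splitting
\[
\sigma_V\colon F^e_*\mathcal{O}_V(\llcorner (p^e-1)\Gamma\lrcorner) \longrightarrow \mathcal{O}_V
\]
of the natural map $\mathcal{O}_V \xrightarrow{F^e} F^e_*\mathcal{O}_V(\llcorner (p^e-1)\Gamma\lrcorner)$. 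Then I would apply $j_*$ to $\sigma_V$ and precompose with the unit of adjunction $F^e_*\mathcal{O}_Y(\llcorner (p^e-1)\Gamma\lrcorner) \to j_*j^*F^e_*\mathcal{O}_Y(\llcorner (p^e-1)\Gamma\lrcorner) = j_*F^e_*\mathcal{O}_V(\llcorner (p^e-1)\Gamma\lrcorner)$; since $j_*\mathcal{O}_V = \mathcal{O}_Y$, this produces an $\mathcal{O}_Y$-linear map
\[
\tau\colon F^e_*\mathcal{O}_Y(\llcorner (p^e-1)\Gamma\lrcorner) \longrightarrow \mathcal{O}_Y ,
\]
and the remaining task is to check that $\tau$ is a left inverse of the natural map $\iota\colon \mathcal{O}_Y \to F^e_*\mathcal{O}_Y(\llcorner (p^e-1)\Gamma\lrcorner)$.

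To conclude, I would observe that $\tau\circ\iota$ is an $\mathcal{O}_Y$-module endomorphism of $\mathcal{O}_Y$, hence multiplication by a global section $g \in H^0(Y,\mathcal{O}_Y)$. Restricting everything to $V$ turns the unit of adjunction into the identity, so $\tau|_V = \sigma_V$ and therefore $(\tau\circ\iota)|_V$ equals $\sigma_V$ composed with the natural map over $V$, which is $\mathrm{id}_{\mathcal{O}_V}$; thus $g|_V = 1$, and since $Y$ is integral with $V$ dense, $g = 1$. Hence $\tau\circ\iota = \mathrm{id}_{\mathcal{O}_Y}$, and as $e$ was arbitrary, $(Y,\Gamma)$ is globally $F$-split.

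The step I expect to require care — the only non-formal point — is the realization that one should \emph{not} push $\sigma$ forward directly over all of $Y$: because $\Gamma$ need not be $\mathbb{Q}$-Cartier and the $f$-exceptional divisors impose extra vanishing conditions, $f_*\mathcal{O}_X(\llcorner (p^e-1)\Delta\lrcorner)$ is in general only a proper subsheaf of $\mathcal{O}_Y(\llcorner (p^e-1)\Gamma\lrcorner)$, the two coinciding away from the codimension $\geq 2$ locus $Z$. Routing the argument through $V$ and re-extending via $j_*$ — legitimate precisely because $\mathcal{O}_Y$ is $S_2$ — is what bridges this gap; everything else is compatibility of Frobenius with restriction and formal sheaf theory.
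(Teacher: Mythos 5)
Your argument is correct and follows the same route as the paper's proof: restrict the given splitting to the open locus $V\subseteq Y$ where $f$ is an isomorphism (whose complement has codimension $\geq 2$ by normality), observe that it yields a splitting of the relevant Frobenius map for $(V,\Gamma|_V)$, and then extend back to $Y$ via $j_*$ using $j_*\mathcal O_V=\mathcal O_Y$. The paper states this tersely, while you additionally spell out the adjunction-unit bookkeeping and the ``multiplication by $g=1$'' check, and you add a useful remark on why a direct pushforward of the splitting would fail; these are elaborations, not a different method.
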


\begin{proof}
Let  $e\in \Z_{>0}$. 
%For every divisor $D$ on $X$ and a sufficiently divisible $e>0$, the Frobenius morphism.
Since $(X, \Delta)$ is globally $F$-split, the Frobenius homomorphism induced by $(X,\Delta)$ splits: 
\[
\mathrm{id}\colon \mathcal{O}_X \xrightarrow{F^e} F_*^e\mathcal{O}_X(\llcorner (p^e-1)\Delta\lrcorner) 
\xrightarrow{\exists \,\varphi} \mathcal O_X.
\]
Consider  an open subscheme $j \colon U \hookrightarrow Y$ of codimension at least two, 
such that the restriction of $f$ to $U$ induces an isomorphism $f^{-1}(U) \to U$. By restricting to $U$, we have that the Frobenius homomorphism 
splits  on $U$:
\[
\mathrm{id} \colon \mathcal{O}_U \xrightarrow{F^e} F_*^e\mathcal{O}_U(\llcorner (p^e-1)f_*\Delta \lrcorner) 
\xrightarrow{\varphi |_U} \mathcal O_U.
\]
Since $U$ is of codimension at least two, the Lemma follows by taking the pushforward by $j_*$. 
\end{proof}

%
%\begin{proof}
%Fix an arbitrary $e\in \Z_{>0}$. 
%%For every divisor $D$ on $X$ and a sufficiently divisible $e>0$, the Frobenius morphism.
%Since $(X, \Delta)$ is globally $F$-split, the Frobenius homomorphism induced by $(X,\Delta)$ splits: 
%\[
%\mathrm{id}\colon \mathcal{O}_X \xrightarrow{F^e} F_*^e\mathcal{O}_X(\llcorner (p^e-1)\Delta\lrcorner) 
%\xrightarrow{\exists \,\varphi} \mathcal O_X.
%\]
%Consider  an open subscheme $j \colon U \hookrightarrow Y$ of codimension at least two, 
%such that the restriction of $f$ to $U$ induces an isomorphism $f^{-1}(U) \to U$. By restricting to $U$, we have that the Frobenius homomorphism 
%splits  on $U$:
%\[
%\mathrm{id} \colon \mathcal{O}_U \xrightarrow{F^e} F_*^e\mathcal{O}_U(\llcorner (p^e-1)f_*\Delta \lrcorner) 
%\xrightarrow{\exists \varphi |_U} \mathcal O_U.
%\]
%Since $U$ is of codimension at least two, the Lemma follows by taking the pushforward by $j_*$. 
%\end{proof}
%
By a result of  Hacon and Xu, also the opposite implication holds true:
\begin{lemma}[{\cite[Prop.~2.11]{hx13}}]\label{l-pull-GFS} Let $(Y,\Delta)$ be a globally $F$-regular pair and $f \colon X \to Y$ a proper birational morphism of normal varieties such that $f^*(K_Y + \Delta) = K_X + \Delta'$, for some $\mathbb Q$-divisor $\Delta'\ge 0$.

 Then, $(X,\Delta')$ is globally $F$-regular.
\end{lemma}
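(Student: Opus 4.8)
The plan is to verify global $F$-regularity of $(X,\Delta')$ straight from the definition, producing each required splitting by pulling one back from $Y$ and using the identity $f^*(K_Y+\Delta)=K_X+\Delta'$ to control what happens along the $f$-exceptional divisors. Throughout one may freely trade round-downs for round-ups, since for a $\mathbb Q$-divisor these differ by a fixed integral divisor that can be absorbed into the auxiliary twist.

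Fix an effective divisor $D$ on $X$; the task is to find $e>0$ for which $\mathcal O_X\to F^e_*\mathcal O_X(\lceil(p^e-1)\Delta'\rceil+D)$ splits. Writing $G:=\lceil(p^e-1)\Delta'\rceil+D$ and applying Grothendieck duality for the finite morphism $F^e$ (equivalently, the Cartier-operator picture of $F$-splittings), such a splitting is the same data as a rational function $\varphi$ with $\operatorname{div}_X(\varphi)+(1-p^e)K_X-G\ge 0$ which is moreover non-degenerate, in the sense that the $\mathcal O_X$-linear map it defines carries the tautological section of $\mathcal O_X(G)$ to $1$. So the problem becomes to produce such a $\varphi$, and it will come from $Y$.

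Choose the effective divisor $E:=f_*D+B$ on $Y$, where $B$ is a fixed effective Cartier divisor highly singular along each centre $Z_i:=f(\Gamma_i)$ of an $f$-exceptional prime $\Gamma_i$; concretely, taking $B=\operatorname{div}(s)$ for a general $s\in H^0\!\big(Y,\mathcal I_{\bigcup_i Z_i}^{\,N}\otimes\mathcal O_Y(mH)\big)$ with $H$ ample, $m\gg 0$, and $N$ large enough (depending only on $D$), one gets $v_{\Gamma_i}(B)\ge N$ for all $i$; here I use that $Y$ is quasi-projective. Since $(Y,\Delta)$ is globally $F$-regular, for this fixed $E$ there is $e>0$ with $\mathcal O_Y\to F^e_*\mathcal O_Y(\lceil(p^e-1)\Delta\rceil+E)$ split, and dually this produces a non-degenerate rational function $\varphi$ with $\operatorname{div}_Y(\varphi)+(1-p^e)(K_Y+\Delta)-\Theta_e-E\ge 0$, where $\Theta_e:=\lceil(p^e-1)\Delta\rceil-(p^e-1)\Delta$ satisfies $0\le\Theta_e<1$. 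I claim the same $\varphi$ works on $X$. One checks the inequality $\operatorname{div}_X(\varphi)+(1-p^e)K_X-G\ge 0$ coefficient by coefficient: along the strict transform of a prime of $Y$ it is immediate, since there $\Delta'$ and $\Delta$ have the same coefficient and $E\ge f_*D$; along an exceptional $\Gamma_i$ one enlarges the divisor $M:=(1-p^e)(K_Y+\Delta)-\Theta_e-E$ on $Y$ to the $\mathbb Q$-Cartier divisor $M^{\circ}:=(1-p^e)(K_Y+\Delta)-B\ge M$, whose pullback to $X$ is defined, yielding $v_{\Gamma_i}(\varphi)\ge(p^e-1)\operatorname{coeff}_{\Gamma_i}\!\big(f^*(K_Y+\Delta)\big)+v_{\Gamma_i}(B)$, and $v_{\Gamma_i}(B)\ge N$ is, for $N$ chosen large enough, more than the loss $\operatorname{coeff}_{\Gamma_i}(G-(1-p^e)K_X)-(p^e-1)\operatorname{coeff}_{\Gamma_i}\!\big(f^*(K_Y+\Delta)\big)$, which is bounded above independently of $e$. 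Finally, non-degeneracy of the induced map on $X$ follows by comparing with the one on $Y$ over the dense open subset where $f$ is an isomorphism, together with the fact that a regular function on the normal variety $X$ vanishing on a dense open set vanishes identically.

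The one genuinely delicate step is the estimate along the exceptional divisors. Because the $\Gamma_i$ are contracted by $f$, membership of $\varphi$ in a sheaf on $Y$ gives no a priori bound on $v_{\Gamma_i}(\varphi)$, and a naive pullback of the splitting from $Y$ fails outright — $\varphi$ may well have poles along the $\Gamma_i$. The remedy is exactly the twist by the Cartier divisor $B$, chosen singular enough at the centres $Z_i$ to force the vanishing we need; this option is available only because $(Y,\Delta)$ is globally $F$-regular rather than merely globally $F$-split, so that we are permitted to twist the Frobenius map on $Y$ by the large auxiliary divisor $E=f_*D+B$. Everything else is bookkeeping with valuations and round-downs, together with the standard identification of $F$-splittings with global sections via duality for the Frobenius.
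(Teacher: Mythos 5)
The paper offers no proof of this lemma --- it is quoted verbatim from Hacon--Xu \cite[Prop.~2.11]{hx13} --- so there is nothing in the paper to compare against; I am therefore judging your argument on its own. Your proof is correct, and it is, in substance, the argument one expects for this statement. All the essential moves are in place: the duality dictionary turning a Frobenius splitting of the twist by $G$ into a rational function $\varphi$ with $\operatorname{div}(\varphi)+(1-p^e)K-G\ge 0$ and $\varphi\mapsto 1$; the choice on $Y$ of the auxiliary effective divisor $E=f_*D+B$ with $B$ Cartier and of multiplicity $\ge N$ along the $f$-exceptional centres, which is precisely what tames the otherwise uncontrolled poles of $\varphi$ along the exceptional primes of $X$ (a point you correctly single out as the crux); the use of $K_X+\Delta'=f^*(K_Y+\Delta)$ together with the $\mathbb Q$-Cartier enlargement $M^\circ=(1-p^e)(K_Y+\Delta)-B$ to reduce the coefficient check along $\Gamma_i$ to $N\ge \operatorname{coeff}_{\Gamma_i}(\Theta'_e)+\operatorname{coeff}_{\Gamma_i}(D)$, whose right-hand side is bounded by $1+\operatorname{coeff}_{\Gamma_i}(D)$ independently of $e$; and the density argument for non-degeneracy. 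The floor/ceiling exchanges are harmless because global $F$-regularity quantifies over all effective twists. One implicit hypothesis: you use that $Y$ is quasi-projective (to manufacture $B$ from an ample $H$); this is not written into the lemma but is the standing assumption in \cite{hx13} and is satisfied in the paper's application, where $Y=Z$ is a projective surface. If you want to make the pullback step fully explicit, it is worth a line to observe that $\operatorname{div}_X(\varphi)=f^*\operatorname{div}_Y(\varphi)$ because principal divisors are Cartier, so the inequality $\operatorname{div}_Y(\varphi)+M^\circ\ge 0$ pulls back cleanly to $X$.
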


\medskip

The following result is a consequence of \cite{watanabe91} and an elementary calculation. 

\begin{lemma}\label{l_P1}
Let $k$ be an algebraically closed field of characteristic ${p > 0}$. 
Then, there exist   four distinct closed points $Q_1, \ldots, Q_4 \in \mathbb P^1_k$ 
such that the pair 
\[
\Big(\mathbb P_k^1, \frac{1}{2}\sum_{i=1}^4 Q_i\Big)
\]
is not globally $F$-split.  
\end{lemma}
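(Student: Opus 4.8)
The strategy is to use the well-known description of the global $F$-splitting of a log pair $(\mathbb P^1, \Delta)$ in terms of an explicit section-lifting criterion, and then to choose the four points so that the criterion fails for some (in fact every) $e$. Concretely, recall that $(\mathbb P^1, \frac12\sum_{i=1}^4 Q_i)$ is globally $F$-split iff for every $e>0$ the natural map
\[
H^0\big(\mathbb P^1, \mathcal O(-K_{\mathbb P^1})\big) \longrightarrow H^0\big(\mathbb P^1, F^e_*\mathcal O(\llcorner (p^e-1)\tfrac12\textstyle\sum Q_i\lrcorner) \otimes \mathcal O(-K_{\mathbb P^1})\big)
\]
is nonzero after a suitable twist; equivalently, choosing homogeneous coordinates so that the $Q_i$ are cut out by linear forms $\ell_1,\dots,\ell_4$, one needs a homogeneous polynomial $g$ of the appropriate degree such that $g^{p^e-1}$, viewed modulo the ideal generated by the $p^e$-th powers of the variables (i.e.\ after applying the inverse Cartier/trace operator), has nonzero "diagonal'' coefficient — and this is governed by the coefficient of $(xy)^{p^e-1}$ in $\prod_{i=1}^4 \ell_i^{(p^e-1)/2}$ when $p$ is odd, and by a parallel computation when $p=2$. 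So the first step is to write down this combinatorial criterion precisely (citing \cite{watanabe91} or the standard Fedder-type criterion for the cone over $(\mathbb P^1,\Delta)$), reducing the lemma to a statement about a single polynomial coefficient.

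**The explicit choice.** Next I would make the coordinate choice. Send three of the points to $0, 1, \infty$ and let the fourth be a parameter $\lambda$, so that, after dehomogenizing, the relevant polynomial is $\big(x(x-1)(x-\lambda)\big)^{(p^e-1)/2}$ (odd $p$), and the obstruction to $F$-splitting at level $e$ is the vanishing of the coefficient of $x^{p^e-1}$ in this expansion. This coefficient is, up to sign, a classical object: it is (a value of) the Hasse–Deuring polynomial / the truncation of ${}_2F_1$ attached to the Legendre family of elliptic curves $y^2 = x(x-1)(x-\lambda)$, and its vanishing for a given $\lambda$ is exactly the condition that the corresponding elliptic curve be supersingular. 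Since over $\overline{\mathbb F}_p$ (hence over any algebraically closed field of characteristic $p$) there always exists a supersingular value $\lambda_0$ — e.g.\ $j=0$ for $p\equiv 2\ (3)$, $j=1728$ for $p\equiv 3\ (4)$, and in general by Deuring's theorem the supersingular locus is nonempty and finite — one obtains four distinct points $\{0,1,\infty,\lambda_0\}$ for which the $e=1$ map already fails to split, so the pair is not globally $F$-split. For $p=2$ one argues directly: every Artin–Schreier-type computation shows the four-point half-divisor on $\mathbb P^1$ is never globally $F$-split once the cross-ratio is chosen to be the unique supersingular one (equivalently one just checks the single relevant coefficient by hand in characteristic $2$ and $3$, which are the only genuinely special primes).

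**Main obstacle.** The technical heart — and the place where "\emph{an elementary calculation}'' in the text is doing real work — is translating the global $F$-splitting of $(\mathbb P^1, \frac12\sum Q_i)$ into the supersingularity of the associated Legendre elliptic curve, and then invoking the existence of a supersingular $j$-invariant in every characteristic. Once that dictionary is set up the rest is bookkeeping: verifying the degree/twist bookkeeping in the section-lifting criterion, and handling the small primes $p=2,3$ (and perhaps $p=5$, where Watanabe's theorem stops applying) by an explicit coefficient computation rather than by the clean elliptic-curve argument. An alternative, more self-contained route that avoids the elliptic-curve language altogether is to use Watanabe's classification directly: \cite[Theorem 4.2]{watanabe91} guarantees $F$-regularity of $(\mathbb P^1, \Delta)$ with standard coefficients only for $p>5$ and only for \emph{generic} configurations, and a finite-list analysis of which $(\mathbb P^1, \frac12\sum_{i=1}^4 Q_i)$ are $F$-split shows the failure locus is a nonempty (Zariski-closed, proper) subset of the moduli of four points, whence a rational point of it over the algebraically closed $k$ exists. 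I would present the elliptic-curve version as the main argument because it makes the "for every $p$'' uniformity transparent, and relegate the characteristic $2,3$ checks to a short remark.
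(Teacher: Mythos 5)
Your proposal is correct and follows the same overall skeleton as the paper's proof: both invoke Watanabe's criterion \cite[Theorem~4.2(h)]{watanabe91} to translate global $F$-splitting of $\bigl(\mathbb P^1, \frac12\sum Q_i\bigr)$ into the nonvanishing of a single degree-$\frac{p-1}{2}$ polynomial in the cross-ratio parameter, and then produce a choice of parameter making that polynomial vanish. The difference is in the existence step. The paper identifies the polynomial as $\sum_{i=0}^n\binom{n}{i}^2\mu^i=\mu^n+n^2\mu^{n-1}+\dots+1$ (with $p=2n+1$) and argues by elementary contradiction: if all roots were $0$ or $1$, the constant term forces the polynomial to be $(\mu-1)^n$, whence $n^2\equiv -n\pmod p$, impossible because $n+1<p$. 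You instead recognize the polynomial as the Deuring--Hasse polynomial $H_p(\lambda)$ of the Legendre family, whose roots are precisely the supersingular $\lambda$-values, and invoke nonemptiness of the supersingular locus to produce the required $\lambda_0$ (automatically $\ne 0,1$, as those are nodal degenerations). Your route is conceptually illuminating --- it explains the non-$F$-splitting as supersingularity of the associated double cover --- at the price of importing the theory of supersingular elliptic curves, where the paper's computation is three lines of binomial arithmetic. Two small remarks. First, you do not actually need Deuring's theorem here: over an algebraically closed field the nonconstant polynomial $H_p$ has a root, and one checks directly that $H_p(0)=1$ and $H_p(1)=\binom{p-1}{(p-1)/2}\equiv(-1)^{(p-1)/2}\pmod p$ are nonzero, so a root $\ne 0,1$ exists with no elliptic-curve input at all. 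Second, there is no special issue at $p=5$: the $F$-split criterion 4.2(h) in \cite{watanabe91} applies for every $p$; it is only his $F$-regularity statement that requires $p>5$, and that is not what is being used.
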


\begin{proof}
If $p=2$, then  \cite[Thm.~4.2(h)]{watanabe91} implies that the pair is not globally $F$-split, for any choice of the points $Q_1,\dots,Q_4$. Thus, we may write $p=2n+1$ for some $n \in \mathbb Z_{>0}$. 
%We choose any four points $Q_1, \ldots, Q_4 \in \mathbb P_k^1$ and fix the standard chart on $\mathbb P^1$. By taking a projective transformation, we can assume that
Consider the four distinct points of $\mathbb P^1_k$: 
\begin{align*}
 Q_1 &= [1,0], \ Q_2 = [0,1], \ Q_3 = [-1,1], \text{ and } Q_4 = [-\mu, 1]
\end{align*}
where $\mu \in k \, \backslash \, \{0,1\}$. By \cite[Thm.~4.2(h)]{watanabe91}, 
%\footnote{Reference:   
%K. Watanabe, Journal of Pure and Applied Algebra 71 (1991), 341--350, North-Holland.}, 
the  pair 
\[
\Big(\mathbb P_k^1, \frac{1}{2}\sum_{i=1}^4 Q_i\Big )
\]
is globally $F$-split if and only if the coefficient of $x^n$ in the expansion of $(x+1)^n(x+\mu)^n$ is not zero. 

The coefficient of $x^n$ can be expressed in terms of the following monic polynomial in the variable $\mu$: 
\[
\sum_{i=0}^n\binom{n} {i}  \binom{n} {n-i}\mu^i= \mu^n+n^2\mu^{n-1}+\ldots+1 \in k[\mu].
\]
Since $k$ is algebraically closed, it is enough to show that such a  polynomial admits a solution which is different from  zero and one. 
By contradiction, assume that 
\[
\mu^n+n^2\mu^{n-1}+\ldots+1=\mu^a(\mu-1)^b,
\]
for some non-negative integers $a$ and $b$
satisfying $a+b=n$. 
Clearly $a=0$, hence we obtain 
\[
\mu^n+n^2\mu^{n-1}+\ldots+1=(\mu-1)^n\,\,\,\,\mathrm{in}\,\,\,\,k[\mu].
\]
It follows that $n^2\equiv -n\mod p$ and, in 
particular, $p$ divides either $n$ or $n+1$. 
Since $n+1<2n+1=p$, we get a contradiction. Thus, the claim follows. 
\end{proof}

Let $X$ be a normal surface over an algebraically closed field $k$. Let $C$ be a smooth prime divisor on $X$ such that $(X,C)$ is log canonical. Then there exists an effective $\mathbb Q$-divisor $\Diff_C$ on $C$, called the {\em different}, such that 
\[
(K_X+C)|_C=K_C+\Diff_C
\]
\cite[Prop.-Def. 16.5]{Kollaretal}.

\begin{lemma} \label{l_adjunction} Let $(Z,C)$ be a  two-dimensional projective plt pair  over an algebraically closed field of characteristic $p>0$, such that $C$ is a smooth prime divisor. 
If $(C, \mathrm{Diff}_{C})$ is not globally $F$-split and the Cartier index of $K_Z + C$ is not divisible by $p$, 
then $(Z,C)$ is not globally $F$-split.
\end{lemma}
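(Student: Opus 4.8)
My plan is to argue by contradiction. Suppose $(Z,C)$ is globally $F$-split; I will restrict a Frobenius splitting to $C$ and show that $(C,\Diff_C)$ is globally $F$-split, contrary to hypothesis. The dictionary between Frobenius splittings and effective divisors will be Grothendieck duality for the finite Frobenius morphism, and the different $\Diff_C$ will enter only through the adjunction formula $(K_Z+C)|_C=K_C+\Diff_C$.

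\emph{First, a reduction.} Let $m$ be the Cartier index of $K_Z+C$; by hypothesis $p\nmid m$. As $C$ is smooth, $K_C$ is Cartier, so adjunction forces $m\,\Diff_C$ to be integral, whence $\Diff_C$ has coefficients in $\tfrac1m\mathbb Z\subseteq\mathbb Z_{(p)}$ (and $<1$, since $(Z,C)$ is plt). Hence, for $(C,\Diff_C)$, ``globally $F$-split'' and ``globally sharply $F$-split'' coincide, and it suffices to produce a single $e>0$ for which $\mathcal O_C\to F^e_*\mathcal O_C(\llcorner(p^e-1)\Diff_C\lrcorner)$ splits. I fix $e$ with $m\mid p^e-1$. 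Then $\Gamma:=(p^e-1)(K_Z+C)$ is Cartier, $\Gamma|_C=(p^e-1)(K_C+\Diff_C)$ is Cartier on $C$, $\llcorner(p^e-1)C\lrcorner=(p^e-1)C$, $\llcorner(p^e-1)\Diff_C\lrcorner=(p^e-1)\Diff_C$, and, $(Z,C)$ being globally $F$-split, I fix a splitting $\varphi\colon F^e_*\mathcal O_Z((p^e-1)C)\to\mathcal O_Z$ with $\varphi(F^e_*1)=1$.

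\emph{The heart of the argument.} By Grothendieck duality for $F^e\colon Z\to Z$, the map $\varphi$ corresponds to a section of the line bundle $\mathcal O_Z(-\Gamma)$; let $N\ge0$ be its divisor of zeros, so $N\sim-\Gamma=(1-p^e)(K_Z+C)$ and $K_Z+C+\tfrac{1}{p^e-1}N\sim_{\mathbb Q}0$. The fact that $\varphi$ is a splitting is equivalent to saying that its canonical extension $F^e_*\mathcal O_Z((p^e-1)C+N)\to\mathcal O_Z$ is a splitting of the $e$-th Frobenius of $(Z,C+\tfrac1{p^e-1}N)$; in particular this pair is sharply $F$-pure, hence log canonical. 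Consequently $C\not\subseteq\Supp N$ (the coefficient of $C$ must be $\le1$), so $N|_C\ge0$ is well defined with $N|_C\sim-\Gamma|_C=(1-p^e)(K_C+\Diff_C)$, and adjunction applied to the log canonical pair $(Z,C+\tfrac1{p^e-1}N)$ gives that $(C,\Diff_C+\tfrac1{p^e-1}N|_C)$ is log canonical, i.e. $\mult_x(\Diff_C+\tfrac1{p^e-1}N|_C)\le1$ for every $x\in C$. Now I run Grothendieck duality on the smooth projective curve $C$: the section of $\mathcal O_C(-\Gamma|_C)$ with divisor $N|_C$ corresponds to a morphism $\psi\colon F^e_*\mathcal O_C((p^e-1)\Diff_C)\to\mathcal O_C$, and an explicit Fedder-type computation on the smooth curve shows that $\psi(F^e_*1)$ is a unit at $x$ precisely when $\mult_x(\Diff_C+\tfrac1{p^e-1}N|_C)\le1$. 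By the previous sentence this holds at every point, so $\psi(F^e_*1)\in H^0(C,\mathcal O_C)=k$ is a nonzero constant; rescaling, $\psi$ splits $\mathcal O_C\to F^e_*\mathcal O_C((p^e-1)\Diff_C)$.

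\emph{Conclusion and the main difficulty.} The existence of $\psi$ makes $(C,\Diff_C)$ globally sharply $F$-split, hence globally $F$-split by the reduction step, contradicting the hypothesis; so $(Z,C)$ is not globally $F$-split. There is no single hard step, but the points needing care are: the translation between $\varphi$ and the divisor $N$ via Grothendieck duality, together with the standard facts that a global Frobenius splitting restricts to $F$-splittings of all local rings, that $F$-purity implies log canonicity, and that log canonicity descends along a normal (here smooth) component of the boundary to the different; and, above all, the explicit identification on the curve $C$ showing that the map $\psi$ produced from the restricted section $N|_C$ is an honest splitting of $(C,\Diff_C)$ — this is where the bound $\mult\le1$ is consumed, and it is essentially $F$-adjunction in the present low-dimensional situation. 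The plt hypothesis is used to guarantee that $C$ is normal, so that $\Diff_C$ is defined, effective and has coefficients $<1$; the hypothesis $p\nmid m$ supplies the Cartier divisibility used throughout and, via the $\mathbb Z_{(p)}$-coefficients of $\Diff_C$, is exactly what upgrades ``globally sharply $F$-split'' to ``globally $F$-split''.
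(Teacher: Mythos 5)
Your overall strategy --- contradiction, Grothendieck duality, restrict the dual section to $C$ --- mirrors the paper's, which phrases the same computation as a commutative square of trace maps $\mathrm{Tr}^e_Z$, $\mathrm{Tr}^e_C$ together with the isomorphism $H^0(Z,\mathcal O_Z)\simeq H^0(C,\mathcal O_C)$. The genuine gap is the claim that ``$\psi(F^e_*1)$ is a unit at $x$ precisely when $\mult_x(\Diff_C+\tfrac1{p^e-1}N|_C)\le1$.'' Only the ``only if'' direction holds. Locally at $x$ with coordinate $t$, writing the section dual to $\psi$ as $g=t^n u$ with $u$ a unit, the constant term of $\psi(F^e_*1)$ is the coefficient of $t^{p^e-1}$ in $g$; the multiplicity bound guarantees $n\le p^e-1$, which is necessary but by no means sufficient for that coefficient to be nonzero. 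Concretely, take $C=\mathbb P^1$ over $\overline{\mathbb F}_3$, $\Diff_C=0$, $e=1$, and the section $x^4+y^4$ of $\omega_{\mathbb P^1}^{\otimes(1-p)}$: its divisor consists of four simple points, so every multiplicity of $\tfrac12 N$ is $\tfrac12\le 1$, yet the coefficient of $x^2y^2$ is zero and the associated $\psi$ is not a splitting. Hence log canonicity of $(C,\Diff_C+\tfrac1{p^e-1}N|_C)$ --- which is exactly what your multiplicity bound records --- cannot by itself force $\psi(F^e_*1)$ to be a unit, and your argument collapses at the crucial last step.

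What actually closes the gap is that $\psi$ is not an arbitrary map dual to some section with divisor $N|_C$: it is the \emph{restriction of $\varphi$ along $C$}. Because the coefficient of $C$ in the twist is $p^e-1$, the projection formula shows that $\varphi$ is $C$-compatible (it sends $F^e_*I_C$ into $I_C$), and the adjunction isomorphism $\omega_Z(C)|_C\simeq\omega_C$ identifies the descended map with $\psi\colon F^e_*\mathcal O_C((p^e-1)\Diff_C)\to\mathcal O_C$. Since $\varphi(F^e_*1)=1$ and restriction $H^0(Z,\mathcal O_Z)\to H^0(C,\mathcal O_C)$ is an isomorphism, one gets $\psi(F^e_*1)=1$ directly. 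This is Schwede's $F$-adjunction, and it is exactly what the paper encapsulates in the commutativity of its diagram of trace maps; your detour through sharp $F$-purity $\Rightarrow$ log canonicity $\Rightarrow$ adjunction of lc pairs is correct but, as the counterexample shows, does not supply the needed information.
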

\begin{proof}
Let $e \in \mathbb{Z}_{>0}$ be such that $(p^e-1)(K_Z + C)$ is Cartier. It is enough to show that the trace map 
\[
\textrm{Tr}^e_Z \colon H^0\left(Z, F^e_*\mathcal O_Z \left(-(p^e-1)(K_Z+C)\right)\right) \to H^0(Z, \mathcal O_Z)
\]
is zero. 

Let $\mathcal{L} := \mathcal O_Z(-(p^e-1)(K_Z+C))$. Then 
\[
\mathcal{L}|_C = \mathcal O_C(-(p^e-1)(K_C+\Diff_C))
\]
Consider the following commutative diagram:
\begin{center}
\begin{tikzcd}
H^0(Z, F_*^e\mathcal{L}) \arrow{r} \arrow{d}{\textrm{Tr}_Z^e} &  
H^0(C, F_*^e(\mathcal{L}|_C)) \arrow{d}{\textrm{Tr}_{C}^e} \\
H^0(Z, \mathcal O_Z) \arrow{r}{\simeq} &
H^0(C, \mathcal O_{C}).
\end{tikzcd}
\end{center}
Since $(C, \Diff_C)$ is not globally $F$-split, the right vertical arrow $\mathrm{Tr}^e_C$ is zero. Hence, $\mathrm{Tr}^e_Z$ is zero, and the claim follows.
\end{proof}

%\appendix
%\def\thesection{A}
\section{F-purity on the F-pure threshold}

%
%\begin{definition}
%We say $(X, \Delta)$ is globally $F$-split if 
%$$\mathcal O_X \to F_*^e\mathcal O_X(\llcorner (p^e-1)\Delta\lrcorner)$$
%splits infinitely many $e$ (equivalently, arbitrary $e$). 
%\end{definition}
%

The goal of this section is to prove Theorem \ref{t-fpure-threshold}, 
which implies that being globally $F$-split is preserved after taking a limit. 

The result was already known in  some special cases
(e.g. see \cite[Proposition~2.6]{hara06}, \cite[Theorem 4.1]{hernandez12}
 and 
\cite[Remark 5.5]{schwede08}). 
On the other hand, if we consider a pair $(X, \mathfrak{a})$ consisting of a normal variety $X$ and 
an ideal $\mathfrak{a}$, instead of a divisor, 
then the result does not hold \cite[Proposition 2.2(6) and Remark 2.3(2)]{tw04}. 
Moreover, note that being globally sharply $F$-split is not preserved after taking a limit \cite[Remark 6.2]{schwedesmith10}. 

We begin  by recalling the definitions of the  $F$-pure and the $F$-split thresholds. 
The first notion was introduced by  Takagi and Watanabe in \cite{tw04}. We refer to  \cite{BMS08, BSTZ10,HNWZ15} for more results in this direction.
%\footnote{M. Blickle, K. Schwede, S. Takagi, W. Zhang, 
%Discreteness and rationality of $F$-jumping numbers on singular varieties, 
%Math. Ann., Vol. 347, Issue 4, 917--949 (2010).}
%\footnote{D. Her\'{a}ndez, L. N\'{u}\~{n}ez-Betancourt, E. E. Witt, Wenliang Zhang,  
%Discreteness and rationality of $F$-jumping numbers on singular varieties, 
%to appear in Michigan Math. J.}

\begin{definition}
Let $X$ be an $F$-finite normal scheme of characteristic $p>0$ and 
let $\Delta$ and $D$ be effective $\mathbb R$-divisors. 
\begin{enumerate}
\item We define the $F$-{\em split threshold} of $(X,\Delta)$ with respect to $D$ as\\
%\begin{align*}
$\displaystyle\mathrm{fst}(X, \Delta; D):=\sup\{ t\in \mathbb R_{\geq 0}\,|\, (X, \Delta+tD) \text{ is globally $F$-split}\}$.
%\intertext{ 
\item We define the $F$-{\em pure threshold} of $(X,\Delta)$ with respect to $D$ as\\ %  }
$\displaystyle\mathrm{fpt}(X, \Delta; D) :=\sup\{ t\in \mathbb R_{\geq 0}\,|\, (X, \Delta+tD) \text{ is  $F$-pure}\}$.
%\end{align*}
\end{enumerate}

\end{definition}

The following Theorem is used in Section \ref{s-2dim} to prove Theorem \ref{Intro-main}:

\begin{theorem}\label{t-fpure-threshold}
Let $X$ be an $F$-finite noetherian normal scheme of characteristic $p>0$ and let $\Delta$ be an  $\mathbb R$-divisor 
such that $(X,\Delta)$ is globally $F$-split. Let $D$ be an effective $\mathbb R$-divisor on $X$ and let $c=\mathrm{fst}(X, \Delta; D)$.

Then $(X, \Delta+cD)$ is globally $F$-split.
\end{theorem}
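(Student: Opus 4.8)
The plan is to prove, separately for each fixed $e\in\mathbb Z_{>0}$, that the $e$-th iterated Frobenius of $(X,\Delta+cD)$ splits as an $\mathcal O_X$-module homomorphism; since $e$ is arbitrary this is exactly the statement that $(X,\Delta+cD)$ is globally $F$-split. The subtlety is that one cannot simply take a limit of the level-$e$ splittings of the pairs $(X,\Delta+tD)$, $t<c$: as $t\uparrow c$ these only witness the divisor $\lfloor(p^e-1)(\Delta+cD)\rfloor$ \emph{minus} a reduced correction divisor (supported at components $P$ of $D$ where $(p^e-1)\mathrm{coeff}_P(\Delta+cD)$ is an integer). So I would instead descend from a splitting at level $2e$, where the extra factor $p^e+1=\tfrac{p^{2e}-1}{p^e-1}$ provides precisely enough room to absorb that correction.

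Set $\Lambda_e:=\lfloor(p^e-1)(\Delta+cD)\rfloor$. If $c=0$ there is nothing to prove, so assume $c>0$. First I would pick $t$ with $0\le t<c$, sufficiently close to $c$, so that coefficientwise
\[
N:=\lfloor(p^{2e}-1)(\Delta+tD)\rfloor=\lfloor(p^{2e}-1)(\Delta+cD)\rfloor-\Gamma,
\]
where $\Gamma$ is the reduced divisor whose components are the prime divisors $P$ occurring in $D$ with $(p^{2e}-1)\,\mathrm{coeff}_P(\Delta+cD)\in\mathbb Z$. Such a $t$ exists because, at each of the finitely many primes in $\Supp(\Delta)\cup\Supp(D)$, the map $t\mapsto\lfloor(p^{2e}-1)(\Delta+tD)\rfloor$ is a non-decreasing step function with no accumulation of jumps at $c$, hence eventually constant as $t\uparrow c$, with value there equal to $\lfloor(p^{2e}-1)(\Delta+cD)\rfloor$ (resp.\ that minus $1$) according as $(p^{2e}-1)\,\mathrm{coeff}_P(\Delta+cD)\notin\mathbb Z$ (resp.\ $\in\mathbb Z$ and $P\subset\Supp D$). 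Since $(X,\Delta+tD)$ is globally $F$-split, there is an $\mathcal O_X$-linear $\rho\colon F^{2e}_*\mathcal O_X(N)\to\mathcal O_X$ splitting $\mathcal O_X\xrightarrow{F^{2e}}F^{2e}_*\mathcal O_X(N)$.

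The key estimate — which I regard as the heart of the argument, and the one nontrivial point — is that $N\ge p^e\Lambda_e$ coefficientwise. Writing $a=\mathrm{coeff}_P(\Delta+cD)$ and $z=(p^e-1)a$, and using $p^{2e}-1=(p^e+1)(p^e-1)$, one has $\lfloor(p^{2e}-1)a\rfloor=\lfloor(p^e+1)z\rfloor\ge(p^e+1)\lfloor z\rfloor=p^e\lfloor z\rfloor+\lfloor z\rfloor$, so the coefficient of $N$ at $P$ is at least $p^e\lfloor z\rfloor+\lfloor z\rfloor-1\ge p^e\lfloor z\rfloor$ whenever $\lfloor z\rfloor=\mathrm{coeff}_P(\Lambda_e)\ge 1$; and when $\mathrm{coeff}_P(\Lambda_e)=0$ one checks directly (here one uses $c>0$, which forces $a>0$ at every component of $D$, so $(p^{2e}-1)a$ is a \emph{positive} integer if it is an integer at all) that the coefficient of $N$ at $P$ is still $\ge 0$. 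This is exactly where the at-most-$1$ deficiency introduced by passing to $t<c$ is swallowed, and it is the reason level $2e$ is used rather than level $e$.

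Granting $N\ge p^e\Lambda_e$, the $p^e$-linear map $g\mapsto g^{p^e}$ defines an $\mathcal O_X$-module homomorphism $j\colon\mathcal O_X(\Lambda_e)\to F^e_*\mathcal O_X(N)$ (well-defined precisely because $p^e\Lambda_e\le N$), and then $\bar\rho:=\rho\circ F^e_*(j)\colon F^e_*\mathcal O_X(\Lambda_e)\to\mathcal O_X$ is $\mathcal O_X$-linear with $\bar\rho\big(F^e(1)\big)=\rho\big(F^{2e}(1)\big)=1$; thus $\bar\rho$ splits $\mathcal O_X\xrightarrow{F^e}F^e_*\mathcal O_X(\Lambda_e)$. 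Since $e$ was arbitrary, $(X,\Delta+cD)$ is globally $F$-split. Everything beyond the coefficient inequality of the previous paragraph is formal bookkeeping with Frobenius pushforwards and the reflexive sheaves $\mathcal O_X(D)$.
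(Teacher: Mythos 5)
Your proposal is correct, and it takes a genuinely more direct route than the paper. The paper reduces to a special case through three steps: Step 1 handles coefficients in $\mathbb R\setminus\mathbb Z_{(p)}$ (where the floor does not jump at the threshold), Step 2 handles $D$ prime with $\Delta$ and $c$ having $\mathbb Z_{(p)}$-coefficients via a counting function $\nu(e)$ and a contradiction obtained by doubling the Frobenius level, and Step 3 interlaces the two to treat general $\Delta+cD$. You bypass the entire reduction: for each fixed $e$ you take $t<c$ close enough, extract a level-$2e$ splitting $\rho$ for $(X,\Delta+tD)$, and precompose it with the $\mathcal O_X$-linear map $F^e_*\mathcal O_X(\Lambda_e)\to F^{2e}_*\mathcal O_X(N)$ induced by the $p^e$-th power; the coefficient inequality $N\geq p^e\Lambda_e$ is exactly what makes this well-defined, and it absorbs the drop by at most one that occurs at the components where $(p^{2e}-1)\,\mathrm{coeff}_P(\Delta+cD)$ is an integer. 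I have checked your coefficient estimate: writing $z=(p^e-1)a$, the no-correction case follows from $\lfloor(p^e+1)z\rfloor\geq(p^e+1)\lfloor z\rfloor$; the correction case with $\lfloor z\rfloor\geq 1$ follows from $(p^e+1)z-1\geq p^e\lfloor z\rfloor+\lfloor z\rfloor-1$; and the case $\lfloor z\rfloor=0$ uses that $(p^{2e}-1)a$ is a positive integer since $P\subset\Supp D$ and $c>0$ force $a>0$. At bottom your map is the same Frobenius-twisting construction the paper uses to propagate non-splitting from level $e$ to level $2e$, applied contrapositively to transport a splitting downward; what your version buys is a uniform argument with no case-division on denominators and no auxiliary threshold functions $\nu(e)$, while the paper's formulation of Step 2 isolates an arithmetic statement about the sequence $\nu(e)$ that may be of some independent interest.
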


\begin{proof}
If $c=0$, then there is nothing to show. 
Thus, we may assume $c>0$. We divide the proof into three steps:
 
\setcounter{step}{0}

\begin{step}\label{step-nonZp}

We first show the Theorem, assuming that the following property holds:
\begin{enumerate}
%\item{$D$ is a prime divisor, $D \not\subset \Supp \Delta$, and $\Delta$ has coefficients in  $\mathbb Z_{(p)}$. 
%Moreover, $\mathrm{fst}(X, \Delta; D) \in \mathbb R \setminus \mathbb Z_{(p)}$. }
\item{$D$ and $\Delta$ do not have any common component, 
 $D$  has coefficients in $\mathbb R \setminus \mathbb Z_{(p)}$, and 
$\mathrm{fst}(X, \Delta; D) =1$.}
\end{enumerate} 
\end{step}
%Assume that (1) holds. Then, after replacing $D$ by $cD$, the assumptions in (2) hold. Thus, it is enough to show the Theorem assuming that (2) holds. 

Let $e\in \Z_{>0}$. 
We need to show that 
\[
\mathcal O_X \longrightarrow F_*^e\mathcal O_X(\llcorner (p^e-1)(\Delta+D) \lrcorner)
\]
splits. 
Since $D$ and $\Delta$ do not have any common component, we have 
\[
\llcorner (p^e-1)(\Delta+D) \lrcorner=\llcorner (p^e-1)\Delta\lrcorner+\llcorner (p^e-1)D\lrcorner.
\]
Since the coefficients of $D$ are contained in $\mathbb R \setminus \mathbb Z_{(p)}$, 
there exists $t\in (0,1)$ such that 
\[
\llcorner (p^e-1)D \lrcorner = \llcorner (p^e-1)t D \lrcorner.
\]
Since $\mathrm{fst}(X,\Delta;D)=1$, the claim follows. 

\begin{step}\label{step-Zp}
We now show the Theorem assuming that the following property holds: 
\begin{enumerate}
\item[(2)] {$D$ is a prime divisor, $D \not\subset \Supp \Delta$, and $\Delta$ has coefficients in  $\mathbb Z_{(p)}$. 
Moreover, $\mathrm{fst}(X, \Delta; D) \in \mathbb Z_{(p)}$}.
\end{enumerate}
\end{step}

Since the coefficients of $\Delta+cD$ are contained in $ \mathbb Z_{(p)}$, it is enough to show that there exists $e\in \mathbb Z_{>0}$ such that 
\[
\mathcal O_X \longrightarrow F_*^e\mathcal O_X(\llcorner (p^e-1)(\Delta+cD) \lrcorner)
\]
splits. 
%By (2), we may assume $c \in \mathbb Z_{(p)}$. 
Consider the subset $N_1 \subset \mathbb Z_{>0}$ such that $e \in N_1$ 
if and only if $(p^e-1)\Delta$ has coefficients in  $\mathbb Z$ and $(p^e-1)c$ is an integer. 
For any $e \in N_1$, we define 
\[
\nu(e):=\max\left\{s \in \mathbb Z_{\geq 0}\,\middle|\,  \mathcal O_X \rightarrow F_*^e\mathcal O_X((p^e-1)\Delta+sD)\text{ splits}\right\}.
\]
By the definition of $\nu(e)$ and $c$, 
we see that 
\[
\nu(e) \leq (p^e-1)c \leq \nu(e)+1.
\]
Thus,  
\[
\nu(e) \in \{(p^e-1)c-1, (p^e-1)c\}.
\]
If $\nu(e)=(p^e-1)c$ for some $e \in N_1$, 
then $(X, \Delta+cD)$ is globally $F$-split and we are done. 
We now assume by contradiction that 
\[
\nu(e)=(p^e-1)c-1.
\]
for every $e \in N_1$.

Pick $e \in N_1$. Then, $2e\in N_1$.  Consider 
\[
\mathcal O_X \rightarrow F_*^e\mathcal O_X((p^e-1)\Delta).
\]
After tensoring by $\mathcal O_X((p^e-1)\Delta+(\nu(e)+1)D)$, taking the double dual, and applying $F_*^e$, we obtain: 
\begin{eqnarray*}
F_*^e\mathcal O_X((p^e-1)\Delta+(\nu(e)+1)D) 
\to F_*^{2e}\mathcal O_X((p^{2e}-1)\Delta+p^e(\nu(e)+1)D).
\end{eqnarray*}
Since 
\[
\mathcal O_X \rightarrow F_*^e\mathcal O_X((p^e-1)\Delta+(\nu(e)+1)D)
\]
does not split, neither does 
\[
\mathcal O_X \rightarrow F_*^{2e}\mathcal O_X((p^{2e}-1)\Delta+p^e(\nu(e)+1)D).
\]
Thus,
\[
\nu(2e)<p^e(\nu(e)+1).
\]
Since $\nu(d)=(p^d-1)c-1$ for $d\in \{e, 2e\}$, and since  $(p^e-1)c \in \mathbb Z_{>0}$, we have 
\[
(p^{2e}-1)c-1 <p^e(p^e-1)c \leq p^e(p^e-1)c+(p^e-1)c-1=(p^{2e}-1)c-1.
\]
This is a contradiction. Thus, the claim follows.

\begin{step}\label{step-reduction}
We now show the Theorem in the general case. 
%\begin{enumerate}
%\item{$D$ is a prime divisor, $D \not\subset \Supp \Delta$, and $\Delta$ is a $\mathbb Z_{(p)}$-divisor. 
%Moreover, $\mathrm{fst}(X, \Delta; D) \in \mathbb Z_{(p)}$. }
%\item{$D$ is a prime divisor, $D \not\subset \Supp \Delta$, and $\Delta$ is a $\mathbb Z_{(p)}$-divisor. 
%Moreover, $\mathrm{fst}(X, \Delta; D) \in \mathbb R \setminus \mathbb Z_{(p)}$. }
%\item{$D$ and $\Delta$ have no common prime divisors, 
%each coefficient of $D$ is contained in the set $\mathbb R \setminus \mathbb Z_{(p)}$, and 
%$\mathrm{fst}(X, \Delta; D) =1$.}
%\end{enumerate} 
\end{step}

%We show the assertion in the theorem assuming that it holds if either (1), (2) or (3) holds. 
%We can assume $\Delta=0$, since 
%$$\mathrm{fst}(X, 0; \Delta+cD)=1.$$
%Let 
We may write 
\[
\Delta+cD=\sum_{i=1}^r a_iA_i+\sum_{j=1}^s b_jB_j
\]
where $a_i \in \mathbb Z_{(p)}$, $b_j \in \mathbb R\setminus \mathbb Z_{(p)}$ and $A_1,\dots,A_r,B_1,\dots,B_s$ are irreducible components. 

We first show that $(X, \sum_{i=1}^r a_iA_i)$ is globally $F$-split. 
If not, then by  Step 2, we can find $a'_1 \in \mathbb Z_{(p)}$ such that $0 \leq a'_1<a_1$ and 
 $(X, a'_1A_1+\sum_{i=2}^r a_iA_i)$ is not globally $F$-split. 
Proceeding this way, for every $1\leq i \leq r$, 
we can find $a'_i \in \mathbb Z_{(p)}$ such that $0 \leq a'_i<a_i$ and 
 $(X, \sum_{i=1}^r a'_iA_i)$ is not globally $F$-split. 
 In particular, there exists $\delta\in (0,c)$ such that
 \[
\sum_{i=1}^r a'_iA_i \le \Delta+(c-\delta)D. 
 \]
This contradicts the fact that $(X, \Delta+(c-\delta)D)$ is globally $F$-split for any $\delta\in (0,c)$. 
Thus, $(X, \sum_{i=1}^r a_iA_i)$ is globally $F$-split.

%By Step 2, it follows that   $(X, a_1A_1)$ is globally $F$-split. 
%Similarly,  it follows that  $(X, a_1A_1+a_2A_2)$ is globally $F$-split. 
%Proceeding this way, it follows that $(X, \sum_{i=1}^r a_iA_i)$ is globally $F$-split. 
Let
\[
c':=\mathrm{fst}(X, \sum_{i=1}^r a_iA_i; \sum_{j=1}^s b_jB_j).
\]
If $c'>1$, then $(X,\Delta+cD)$ is globally $F$-split, by the definition of the $F$-split threshold. If $c'=1$, then the same claim follows from  (2) of Step 1. 

We assume now by  contradiction that $0 \leq c'<1$. For each $j=1,\dots,s$, 
there exists $b'_j \in \mathbb Z_{(p)}$ such that $c'b_j<b_j'<b_j$. 
In particular, the pair 
\[
(X, \sum_{i=1}^ra_i A_i+\sum_{j=1}^sb_j'B_j)
\]
is not globally $F$-split. 
By Step 2, it follows that 
$$(X, a_1'A_1+\sum_{i=2}^ra_i A_i+\sum_{j=1}^sb_j'B_j)$$
is not globally $F$-split for some $a'_1 \in \mathbb Z_{(p)}$ with $0\leq a'_1<a_1$. 
Proceeding this way, we see that 
 $$(X, \sum_{i=1}^ra_i'A_i+\sum_{j=1}^sb_j'B_j)$$
is not globally $F$-split for some $a'_i \in \mathbb Z_{(p)}$ with $0\leq a'_i<a_i$ for $i=1,\dots,r$. 
We may find $\delta>0$ such that:  
\[
\begin{aligned}
\sum_{i=1}^ra_i'A_1+\sum_{j=1}^sb_j'B_j \leq& (1-\delta)\left(\sum_{i=1}^ra_iA_i+c\sum_{j=1}^sb_jB_j\right)\\
=&(1-\delta)(\Delta+cD) \leq \Delta+(1-\delta)cD.
\end{aligned}
\]
Thus, $c=\mathrm{fst}(X,\Delta;D)\le (1-\delta)c$, a contradiction. 
\end{proof}

%\begin{theorem}
%Let $X$ is an $F$-finite normal scheme. 
%Let $\Delta$ and $D$ be effective $\mathbb R$-divisors. 
%If $(X, \Delta)$ is $F$-pure, 
%then $(X, \Delta+cD)$ is $F$-pure for $c:=\mathrm{fpt}(X, \Delta; D)$.
%\end{theorem}

As an immediate consequence, we obtain: 

\begin{proposition} Let $X$ be an $F$-finite noetherian normal scheme of characteristic $p>0$ and 
let $\Delta$ be an  $\mathbb R$-divisor such that $(X, \Delta)$ is $F$-pure. 
Let $D$ be an effective $\mathbb R$-divisor on $X$ and let $c=\mathrm{fpt}(X,\Delta;D)$. 

Then $(X,\Delta+cD)$ is  $F$-pure.
\end{proposition}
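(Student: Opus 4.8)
The plan is to deduce the statement from Theorem~\ref{t-fpure-threshold} applied to the local rings of $X$. Recall that, by definition, $(X,\Theta)$ is $F$-pure if and only if $(\Spec\mathcal O_{X,x},\Theta|_{\Spec\mathcal O_{X,x}})$ is globally $F$-split for every $x\in X$. Since $X$ is $F$-finite, noetherian and normal, each localization $\Spec\mathcal O_{X,x}$ shares these properties, so Theorem~\ref{t-fpure-threshold} is available on $\Spec\mathcal O_{X,x}$.

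First I would record an elementary monotonicity remark: if $W$ is an $F$-finite normal scheme and $0\le\Theta_1\le\Theta_2$ are $\mathbb R$-divisors with $(W,\Theta_2)$ globally $F$-split, then $(W,\Theta_1)$ is globally $F$-split as well. Indeed $\llcorner (p^e-1)\Theta_1\lrcorner\le\llcorner (p^e-1)\Theta_2\lrcorner$ for every $e$, so the Frobenius map attached to $\Theta_1$ factors the one attached to $\Theta_2$ through the inclusion $F^e_*\mathcal O_W(\llcorner (p^e-1)\Theta_1\lrcorner)\hookrightarrow F^e_*\mathcal O_W(\llcorner (p^e-1)\Theta_2\lrcorner)$, and composing a splitting of the latter with this inclusion splits the former.

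Now fix $x\in X$, write $\Delta_x$, $D_x$ for the restrictions of $\Delta$, $D$ to $\Spec\mathcal O_{X,x}$, and set $c_x:=\mathrm{fst}(\Spec\mathcal O_{X,x},\Delta_x;D_x)$. Since $F$-purity of $(X,\Delta+tD)$ forces $(\Spec\mathcal O_{X,x},\Delta_x+tD_x)$ to be globally $F$-split, the set of parameters $t$ defining $\mathrm{fpt}(X,\Delta;D)$ is contained in the set defining $c_x$; hence $c\le c_x$. Moreover, $(X,\Delta)$ being $F$-pure gives that $(\Spec\mathcal O_{X,x},\Delta_x)$ is globally $F$-split, so Theorem~\ref{t-fpure-threshold} shows that $(\Spec\mathcal O_{X,x},\Delta_x+c_xD_x)$ is globally $F$-split. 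Applying the monotonicity remark to $\Delta_x+cD_x\le\Delta_x+c_xD_x$, we conclude that $(\Spec\mathcal O_{X,x},\Delta_x+cD_x)$ is globally $F$-split. As $x\in X$ was arbitrary, $(X,\Delta+cD)$ is $F$-pure.

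The argument is essentially formal: all the real content is in Theorem~\ref{t-fpure-threshold}. The only points requiring (routine) care are that localization preserves $F$-finiteness and normality, that $F$-purity is exactly ``globally $F$-split at every stalk'', and the direction of the inequality $c\le c_x$ — which holds because $F$-purity is a more local, hence stronger, condition than global $F$-splitness at one fixed point, so the global threshold is no larger than any of the local ones.
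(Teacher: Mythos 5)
Your argument is correct and is precisely the localization reduction the paper has in mind: the paper offers no proof beyond saying the proposition is ``an immediate consequence'' of Theorem~\ref{t-fpure-threshold}, and your write-up supplies the routine but necessary details (that $F$-finiteness, noetherianness and normality pass to the local rings $\mathcal O_{X,x}$; the monotonicity of global $F$-splitting in the boundary; and the inequality $c\le c_x$ comparing the global $F$-pure threshold with the local $F$-split threshold at each stalk).
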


\section{Non-globally-F-split klt del Pezzo surfaces}\label{s-2dim}

The goal of this section is to prove Theorem~\ref{Intro-main}. 
To this end, we first construct an unbounded sequence of klt del Pezzo surfaces, starting from a fixed log Calabi-Yau pair $(Z,C_Z)$, where $C_Z$ is a prime divisor and such that $Z$  admits exactly four singular points along $C_Z$. 
The construction was inspired by the \lq\lq hunt" in \cite{km99}.
   Note, in particular, that
it holds over any algebraically closed field. 
%By taking $\epsilon \to 0$, we get an unbounded sequence of klt del Pezzo surfaces $X$. In the proof of Theorem \ref{Intro-main}, we show that when $\epsilon$ is sufficiently small and $Q_1, \ldots, Q_4$ are chosen as in Lemma~\ref{l_P1}, then $X$ cannot be globally $F$-split. The construction of $X,$ $Y,$ and $Z$ is inspired by the so-called ``hunt'' as described in \cite{km99}.

\begin{proposition}\label{p_4half}
Let $k$ be an algebraically closed field and let 
\[
Q_1, \ldots, Q_4 \in \mathbb P^1_k
\]
be four distinct closed points.

 Then, there exists a two-dimensional projective plt pair $(Z, C_Z)$, where $C_Z$ is a prime divisor such that the following properties hold: 
 \begin{enumerate}
\item $2(K_Z + C_Z) \sim 0$,
\item there exists an isomorphism $\iota \colon C_Z \xrightarrow{\simeq} \mathbb P_k^1$ such that 
\[
\iota_*\Diff_{C_Z} = \frac{1}{2}\sum_{i=1}^4Q_i,
\] 
and
\item for every $\epsilon \in (0,1)$, there exist birational morphisms of projective klt surfaces 
\begin{center}
\begin{tikzcd}
	Y \arrow{r}{f} \arrow{d}{g} & X \\
	Z&
\end{tikzcd}
\end{center}
such that $-K_{X}$ is ample, the strict transform $C_Y$ of $C_Z$ on $Y$ coincides with the exceptional locus of $f$, and 
there exists $b\in (1-\epsilon,1)$ such that  $K_{Y}+bC_Y=f^*K_{X}$.
%then $b\in (1-\epsilon,1)$. 
%	\item  $C_{Y} := \Exc(f)$, $E_Y:=\Exc(g)$ are prime divisors and that the following assertions hold: 
%
%	\item $1 - \epsilon < b < 1$, where $b$ is a rational number defined by $K_{Y}+bC_Y=f^*K_{X}$,
%	\item  $\iota:C_Z \xrightarrow{\simeq} \mathbb P_k^1$, 
%, and $C_Z = g_*C_Y$.
% where $\Diff_{C_Z}$ is the different 
%defined by 
%\[
%(K_Z + C_Z)|_{C_Z} = K_{C_Z} + \mathrm{Diff}_{C_Z}.
%\]
\end{enumerate}
\end{proposition}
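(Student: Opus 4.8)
The argument has two essentially independent parts: constructing the fixed pair $(Z,C_Z)$, and, for each $\epsilon$, running a ``zoom-in'' birational construction to produce $(Y,X)$.

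\emph{Constructing $(Z,C_Z)$.} The plan is to realise $(\mathbb{P}^1,\tfrac12\sum_{i}Q_i)$ as the adjunction data of a plt log Calabi--Yau surface. Concretely, I would fix an isomorphism $\mathbb{P}^1\xrightarrow{\ \sim\ }L_0$ onto a smooth conic $L_0\subset\mathbb{P}^2$ and choose a smooth conic $B'\subset\mathbb{P}^2$ meeting $L_0$ transversally in exactly the four points corresponding to $Q_1,\dots,Q_4$: such a $B'$ exists because the $Q_i$ lie on the conic $L_0$, hence no three are collinear, so they impose independent conditions and a general member of the pencil of conics through them is smooth, distinct from $L_0$, and meets $L_0$ only at the $Q_i$. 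Let $\phi\colon Z\to\mathbb{P}^2$ be the double cover branched along the nodal quartic $B:=L_0\cup B'$, and set $C_Z:=\phi^{-1}(L_0)_{\mathrm{red}}$. Then: $Z$ has exactly four singular points, all $A_1$, lying over the four nodes of $B$, and these lie on $C_Z$; $\phi|_{C_Z}\colon C_Z\xrightarrow{\ \sim\ }L_0\cong\mathbb{P}^1$ and $\phi^*L_0=2C_Z$; $(Z,C_Z)$ is plt, since near each node it is the quotient of $(\mathbb{A}^2,\{\text{axis}\})$ by $\pm1$; $2(K_Z+C_Z)\sim\phi^*(2K_{\mathbb{P}^2}+B+L_0)=\phi^*\mathcal{O}_{\mathbb{P}^2}(-6+4+2)\sim0$, proving (1); and adjunction along $C_Z$ picks up coefficient $\tfrac12$ at each $A_1$ point, so under $\iota\colon C_Z\xrightarrow{\ \sim\ }L_0\xrightarrow{\ \sim\ }\mathbb{P}^1$ one gets $\iota_*\Diff_{C_Z}=\tfrac12\sum_iQ_i$, proving (2). (In fact $-K_Z=\phi^*\mathcal{O}_{\mathbb{P}^2}(1)$ is ample and $C_Z^2=\tfrac14(\phi^*L_0)^2=2$, so $(Z,C_Z)$ is a log Calabi--Yau structure on a degree-$2$ del Pezzo surface.)

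\emph{The zoom-in.} Fix $\epsilon\in(0,1)$. Here the plan is to build $g\colon Y\to Z$ as an iterated sequence of blow-ups (and contractions) concentrated over $C_Z$ --- in the spirit of the ``hunt'' of \cite{km99} --- so that the strict transform $C_Y$ of $C_Z$ is a smooth rational curve with $C_Y^2\ll0$ which is contractible by a birational morphism $f\colon Y\to X$ with $\Exc(f)=C_Y$ onto a \emph{projective} normal surface $X$ (the projectivity being Artin's contractibility criterion, valid since $C_Y^2<0$). Once $Y$ and $X$ are fixed, the coefficient $b$ in $K_Y+bC_Y=f^*K_X$ is forced: intersecting with $C_Y$ and using $f_*C_Y=0$ gives $b=-(K_Y\cdot C_Y)/C_Y^2$, with $K_Y\cdot C_Y$ read off from $(K_Y+C_Y)\cdot C_Y=\deg(K_{C_Y}+\Diff_{C_Y})$ together with the value of $C_Y^2$. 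As the construction is iterated, $C_Y^2\to-\infty$ and $b\to1$ from below, so taking enough steps forces $b\in(1-\epsilon,1)$. Since $b<1$ the discrepancy of $C_Y$ over $X$ equals $-b>-1$, and the remaining singular points produced are quotient singularities, so $X$ (and likewise $Y$) is klt.

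\emph{The main difficulty.} The crux --- and where the choice of blow-up sequence must be made carefully --- is ensuring that $-K_X$ is ample, i.e.\ that $-f^*K_X=-(K_Y+bC_Y)$ is nef and big and satisfies $-(K_Y+bC_Y)\cdot D>0$ for every curve $D\neq C_Y$. Writing $-(K_Y+bC_Y)=-(K_Y+C_Y)+(1-b)C_Y$ and using $K_Z+C_Z\equiv0$, one has $-(K_Y+C_Y)\equiv-\sum_{i}a_iE_i$ over the $g$-exceptional divisors $E_i$ (with $a_i$ their discrepancies over $(Z,C_Z)$), and the blow-up sequence has to be arranged so that this class, corrected by the small term $(1-b)C_Y$, meets every curve of $Y$ other than $C_Y$ strictly positively --- equivalently, so that every curve surviving on $X$ (images of $g$-exceptional curves, and of strict transforms of curves from $Z$), in particular near the images of the four $A_1$ points, is $K_X$-negative while $C_Y$ is $K_X$-trivial. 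Pushing this positivity through the whole iterated construction is the bulk of the work; together with the numerical computation of $b$ and the routine klt checks, it yields (3).
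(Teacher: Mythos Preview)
Your construction of $(Z,C_Z)$ as a double cover of $\mathbb P^2$ branched along two transversal conics is a valid alternative to the paper's. The paper instead takes the diagonal $C\subset\mathbb P^1\times\mathbb P^1$, sets $F_1,F_2$ to be vertical fibres over $Q_1,Q_2$ and $F_3,F_4$ horizontal fibres over $Q_3,Q_4$, blows up the four points $(F_1\cup F_2)\cap(F_3\cup F_4)$, and contracts the resulting disjoint $(-2)$-curves $F_1,\dots,F_4$. Your $Z$ has $K_Z^2=2$ while the paper's has $K_Z^2=4$, so the two surfaces are genuinely different; your verification of (1) and (2) is nonetheless correct. One caveat: the cyclic cover $z^2=f$ degenerates in characteristic $2$ (over a perfect field it is non-reduced), whereas the proposition is stated over an arbitrary algebraically closed field. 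The paper's purely birational construction is characteristic-free.

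For (3) you have the right shape and you have correctly located the difficulty, but you have deferred exactly the step that carries the idea. The paper does \emph{not} treat the ampleness of $-K_X$ as a bulk positivity computation; it arranges the construction so that this check becomes two lines. The point is that $g\colon Y\to Z$ should extract a \emph{single} prime divisor $E_Y$, with $a(E_Y;Z,C_Z)=-\tfrac12$. Concretely: on the minimal resolution $S$ of $Z$, blow up $n$ times the point of $C$ over $Q_1$, then run an MMP over $Z$ contracting everything except the last exceptional $E_n$; the result $Y$ has $\rho(Y/Z)=1$, and
\[
K_Y+C_Y+\tfrac12 E_Y \;=\; g^*(K_Z+C_Z)\;\equiv\;0,
\]
so $-(K_Y+C_Y)\equiv\tfrac12 E_Y$ is a positive multiple of one irreducible $g$-exceptional curve. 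Since $C_Z$ is ample and $g^*C_Z=C_Y+aE_Y$, the class $C_Y+(a-\delta)E_Y$ is ample for $0<\delta\ll 1$; pushing forward by $f$ gives that $f_*E_Y$, hence $-K_X=\tfrac12 f_*E_Y$, is ample. Meanwhile $C_Y^2=\tfrac72-n$, so $1-b=\dfrac{-(K_Y+C_Y)\cdot C_Y}{-C_Y^2}\le \dfrac{2}{n-\tfrac72}$, and choosing $n$ large forces $b\in(1-\epsilon,1)$. Your version, with several exceptional $E_i$ and a curve-by-curve positivity argument left as ``the bulk of the work'', can in principle be made to go through, but the single-extraction trick is precisely what dissolves the difficulty you flagged.
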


%$$\{X_n \overset{f_n}\leftarrow Y_n \overset{g_n}\to Z_n\}_{n \in \mathbb Z_{>0}}$$
%consisting of birational morphisms of projective klt surfaces, 
%where both of $C_{Y_n}:=\Exc(f_n)$ and $E_{Y_n}:=\Exc(g_n)$ are prime divisors, 
%%which satisfy the following properties. 
%\begin{enumerate}
%\item{Each $-K_{X_n}$ is ample. }
%\item{Let $b_n$ be the rational number such that $K_{Y_n}+b_nC_n=f_n^*K_{X_n}$. 
%Then, $0<b_n<1$, and $\lim_{n \to \infty}b_n = 1$.}
%\item{$Z_n$ and $C_{Z_n}:=(g_n)_*(C_{Y_n})$ does not depends on $n$, 
%i.e. $Z:=Z_n=Z_{n+1}$, and $C_Z:=C_{Z_n}=C_{Z_{n+1}}$.}
%%\item{$(Z, C_{Z})$ is a plt surface, $2(K_{Z}+C_{Z}) \sim 0$, 
%and $\Diff_{C_Z}=\frac{1}{2}\sum_{i=1}^4Q_i$, 
%where $\Diff_{C_Z}$ is defined by the adjunction $$(K_Z+C_Z)|_{C_Z}=K_{C_Z}+%\Diff_{C_Z}.$$ }
%\end{enumerate}
%\end{proposition}

%In the following proof, the points $Q_1, \ldots, Q_4$ are well defined on any rational curve, after we choose its isomorphism with $\mathbb{P}^1_k$.
\begin{proof}
We divide the proof into three steps:

\setcounter{step}{0}
\begin{step}
We first construct $(Z,C_Z)$ and prove $(1)$ and $(2)$. 
\end{step}

\begin{figure}[b]

\begin{tikzpicture}
\draw[dashed] (-3,-3) -- (3,3);
\draw (-1.5,-3) -- (-1.5,3);
\draw (-0.5,-3) -- (-0.5,3);
\draw (-3,0.5) -- (3,0.5);
\draw (-3,1.5) -- (3,1.5);

\draw (-1.5,-1.5) circle(2pt)[fill=black] node[anchor = north west]{$\mathit{Q_1}$};
\draw (-0.5,-0.5) circle(2pt)[fill=black] node[anchor = north west]{ $\mathit{Q_2}$};
\draw (0.5,0.5) circle(2pt)[fill=black] node[anchor = north west]{$\mathit{Q_3}$};
\draw (1.5,1.5) circle(2pt)[fill=black] node[anchor = north west]{$\mathit{Q_4}$};

\draw (-1.5,3) node[anchor = north west]{$F_1$};
\draw (-0.5,3) node[anchor = north west]{$F_2$};
\draw (-3,0.5) node[anchor = north west]{$F_3$};
\draw (-3,1.5) node[anchor = north west]{$F_4$};

\draw (2.5,3) node[anchor = north east]{$C$};

\draw[shift={(-1.5,0.5)}, rotate=45] (-0.05,-0.05) rectangle ++(0.1,0.1)[fill=gray];
\draw[shift={(-0.5,0.5)}, rotate=45](-0.05,-0.05) rectangle ++(0.1,0.1)[fill=gray];
\draw[shift={(-1.5,1.5)}, rotate=45] (-0.05,-0.05) rectangle ++(0.1,0.1)[fill=gray];
\draw[shift={(-0.5,1.5)}, rotate=45] (-0.05,-0.05) rectangle ++(0.1,0.1)[fill=gray];
\end{tikzpicture}
\caption{ Curves on $\mathbb P^1_k\times \mathbb P^1_k$
%$\Big(\mathbb P^1 \times \mathbb P^1, C+\frac{1}{2}\sum_{i=1}^4F_i\Big)$
}
\label{f_1}
\end{figure}
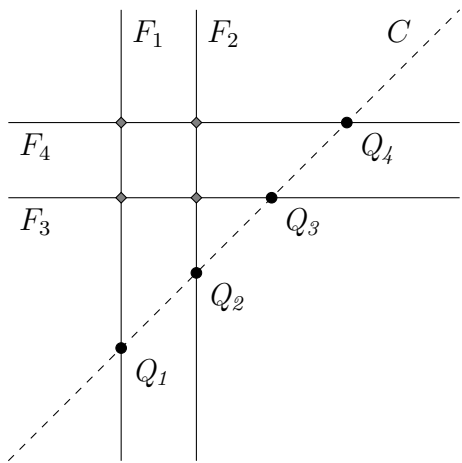

Let $C \simeq \mathbb P^1_k$ be the diagonal of $\mathbb P_k^1 \times \mathbb P_k^1$ and let
%In particular, 
%$C \in |\mathcal O_{\mathbb P^1 \times \mathbb P^1}(1, 1)|$ and naturally 
%$C \simeq \mathbb P^1_k$. 
\begin{align*}
F_1&:=\pi_1^*(Q_1),\ \ F_2:=\pi_1^*(Q_2), \\
F_3&:=\pi_2^*(Q_3),\ \  F_4:=\pi_2^*(Q_4),
\end{align*}
where $\pi_i \colon \mathbb P_k^1 \times \mathbb P_k^1 \to \mathbb P_k^1$ denotes the $i$-th projection, for $i=1,2$ (see Figure \ref{f_1}). 

%\begin{figure}[b]
%\begin{tikzpicture}
%\draw (-6,2) -- (4,2);
%
%\draw[shift={(-4,2)}, rotate=-20] (0,0.7) -- (0,-1.5);
%\draw[shift={(-2,2)}, rotate=-20] (0,0.7) -- (0,-1.5);
%\draw[shift={(0,2)}, rotate=-20] (0,0.7) -- (0,-1.5);
%\draw[shift={(2,2)}, rotate=-20] (0,0.7) -- (0,-1.5);
%
%\draw (-4,2) circle(2pt)[fill=black] node[anchor = south east]{$\mathit{Q_1}$};
%\draw (-2,2) circle(2pt)[fill=black] node[anchor = south east]{ $\mathit{Q_2}$};
%\draw (0,2) circle(2pt)[fill=black] node[anchor = south east]{$\mathit{Q_3}$};
%\draw (2,2) circle(2pt)[fill=black] node[anchor = south east]{$\mathit{Q_4}$};
%
%\draw (-4.8,1.3) node{$F_1$};
%\draw (-2.8,1.3) node{$F_2$};
%\draw (-0.8,1.3) node{$F_3$};
%\draw (1.2,1.3) node{$F_4$};
%
%\draw (4,2) node[anchor = north east]{$C$};
%\end{tikzpicture}
%\caption{Graph of curves on $\Big(S,C +\frac{1}{2}\sum_{i=1}^4F_i \Big)$}
%\end{figure}

In particular, the plt pair 
\[
\Big(\mathbb P_k^1 \times \mathbb P_k^1, C+\frac{1}{2}\sum_{i=1}^4F_i\Big)
\]
satisfies $2\big(K_{\mathbb P_k^1 \times \mathbb P_k^1}+C+\frac{1}{2}\sum_{i=1}^4F_i\big) \sim 0.$ 

Let
\[
\varphi \colon S \longrightarrow \mathbb P_k^1 \times \mathbb P_k^1
\]
be the blow-up along the four points in the set  
\[
(F_1 \cup F_2) \cap (F_3 \cup F_4).
\]

By abuse of notation, we denote the proper transforms of $C,F_1,\dots,F_4$ on $S$ by the same symbols. 
We have 
\[
2\Big(K_{S}+C+\frac{1}{2}\sum_{i=1}^4F_i\Big) \sim 0.
\]

\definecolor{lightgray}{rgb}{0.75, 0.75, 0.75}

Note that $F_1, \ldots, F_4$ are pairwise disjoint $(-2)$-curves on $S$.  Let 
\[
\psi \colon S \rightarrow Z
\]
be the contraction of these curves and let 
 $C_Z:=\psi_*C$. 
We obtain 
\[
2\left(K_Z+C_Z\right)=\psi_*\Big(2\Big(K_{S}+C+\frac{1}{2}\sum_{i=1}^4F_i\Big)\Big) \sim 0.
\]
Further, $(Z,C_Z)$ is plt and $\mathrm{Diff}_{C_Z} = \frac{1}{2}\sum_{i=1}^4Q_i$. Thus, $(1)$ and $(2)$ follow. 

Note that, by the Nakai--Moishezon criterion, $C_Z$ is ample. 
Moreover, $Z$ has exactly four singular points, and all of them lie on the curve $C_Z$.

\begin{step}
We now construct  $X$ and $Y$.
\end{step}

%Take $n = \lceil \frac{2}{\epsilon} +\frac{5}{2} \rceil \in \mathbb{N}$. 
Fix an integer $n \in \mathbb Z$ such that 
\[
n \geq 4\qquad \text{and} \qquad \frac{2}{n-\frac{7}{2}}<\epsilon.
\] 
Let $\overline{S} \to S$ be the birational morphism obtained by blowing-up $n$-times the point $Q_1 \in C$: by the point $Q_1$ in the blow-up of $S$, we mean the unique intersection of the strict transform of $C$ with the exceptional curve.
Again, by abuse of notation, we denote the strict transforms of $C, F_1, \ldots, F_4$ by the same symbols. Further, we denote the exceptional curves on $\overline{S}$ by $E_1, \ldots, E_n$, where $E_i$ is the exceptional curve of the $i$-th blow-up (see Figure \ref{f_2}).

It follows that 
\[
2\Big(K_{\overline{S}}+C+\frac{1}{2}\sum_{i=1}^4F_i+\frac{1}{2}\sum_{i=1}^nE_i\Big) \sim 0.
\]
%, line width = 1.5pt
\begin{comment}
\begin{figure}[h]
\begin{tikzpicture}
\draw (-6,2) -- (4,2);

\draw[shift={(-4,2)}, rotate=-20] (0,0.7) -- (0,-1.5);
\draw[shift={(-2,2)}, rotate=-20] (0,0.7) -- (0,-1.5);
\draw[shift={(0,2)}, rotate=-20] (0,0.7) -- (0,-1.5);
\draw[shift={(2,2)}, rotate=-20] (0,0.7) -- (0,-1.5);

\draw[shift={(-4.1,0.5)}, rotate=20, line width = 1.5pt] (0,1) -- (0,-1);
\draw[shift={(-4.1,-0.3)}, rotate=-20, line width = 1.5pt] (0,1) -- (0,-1);

\draw[shift={(-4.1,-1.55)}] node{$\vdots$}; 

\draw[shift={(-4.1,-3)}, rotate=20, line width = 1.5pt] (0,1) -- (0,-1);
\draw[shift={(-4.1,-3.8)}, rotate=-20] (0,1) -- (0,-1);

\draw (-4,2) circle(2pt)[fill=black] node[anchor = south east]{$\mathit{Q_1}$};
\draw (-2,2) circle(2pt)[fill=black] node[anchor = south east]{ $\mathit{Q_2}$};
\draw (0,2) circle(2pt)[fill=black] node[anchor = south east]{$\mathit{Q_3}$};
\draw (2,2) circle(2pt)[fill=black] node[anchor = south east]{$\mathit{Q_4}$};

\draw (-3.7,1.3) node{$E_n$};
\draw (-1.7,1.3) node{$F_2$};
\draw (0.3,1.3) node{$F_3$};
\draw (2.3,1.3) node{$F_4$};

\draw (-4.9,0.3) node{$E_{n-1}$};
\draw (-4.8,-3) node{$E_{1}$};

\draw (-3.1,-0.5) node{$E_{n-2}$};
\draw (-3.3,-4) node{$F_1$};

\draw (-6,2) node[anchor = north west]{$C$};
\end{tikzpicture}
\end{figure}
\end{comment}

\begin{figure}[t]
\begin{tikzpicture}
\draw (-6,2) -- (4,2);

\draw[shift={(-4,2)}, rotate=-20, line width = 1.5pt] (0,0.7) -- (0,-1.5);
\draw[shift={(-2,2)}, rotate=-20] (0,0.7) -- (0,-1.5);
\draw[shift={(0,2)}, rotate=-20] (0,0.7) -- (0,-1.5);
\draw[shift={(2,2)}, rotate=-20] (0,0.7) -- (0,-1.5);

\draw[shift={(-4.1,0.5)}, rotate=20, line width = 1.5pt] (0,1) -- (0,-1);
\draw[shift={(-4.1,-0.3)}, rotate=-20, line width = 1.5pt] (0,1) -- (0,-1);

\draw[shift={(-4.1,-1.55)}] node{$\vdots$}; 

\draw[shift={(-4.1,-3)}, rotate=20, line width = 1.5pt] (0,1) -- (0,-1);
\draw[shift={(-4.1,-3.8)}, rotate=-20] (0,1) -- (0,-1);

\draw (-4,2) circle(2pt)[fill=black] node[anchor = south east]{$\mathit{Q_1}$};
\draw (-2,2) circle(2pt)[fill=black] node[anchor = south east]{ $\mathit{Q_2}$};
\draw (0,2) circle(2pt)[fill=black] node[anchor = south east]{$\mathit{Q_3}$};
\draw (2,2) circle(2pt)[fill=black] node[anchor = south east]{$\mathit{Q_4}$};

\draw (-5.4,1.3) node{$E_n$};
\draw (-2.8,1.3) node{$F_2$};
\draw (-0.8,1.3) node{$F_3$};
\draw (1.2,1.3) node{$F_4$};

\draw (-5.4,0.3) node{$E_{n-1}$};
\draw (-5.4,-2.9) node{$E_{1}$};

\draw (-5.4,-0.7) node{$E_{n-2}$};
\draw (-5.4,-4) node{$F_1$};

\draw (4,2) node[anchor = north east]{$C$};
\end{tikzpicture}
\caption{ Curves on $\overline S$
%$\Big(\overline{S}, C+\frac{1}{2}\sum_{i=1}^4F_i+\frac{1}{2}\sum_{i=1}^nE_i\Big)$
} 
\label{f_2}
\end{figure}

%\textbf{Claim: } 
%There exists a projective birational morphism 
%$$\mu:S'' \to Y_n$$
%such that 
%$$\Exc(\mu)=A_2'' \cup B_1'' \cup B_2'' \cup (E_{n-1} \cup E_{n-2} \cup \cdots \cup E_2\cup E_1 \cup A_1).''
%\begin{proof}[Proof]
Let 
\[
h \colon \overline{S} \longrightarrow Y
\]
be the birational contraction of $F_2$, $F_3$, $F_4$ and of the chain of curves $E_{n-1}, \ldots, E_1, F_1$. 
We can construct such a birational morphism
by running the MMP over $Z$  \cite[Theorem~6.5]{tanaka12}
with respect to the pair
\[
\Big(\overline{S}, \ C+\frac{1}{2}E_n+\frac{3}{4}\Big(\sum_{i=1}^4F_i+\sum_{i=1}^{n-1}E_i\Big)\Big).
\]

Let $C_{Y} := h_*C$ and $E_{Y} := h_* E_n$. Then, there exists a  birational morphism $g \colon Y \to Z$, whose exceptional divisor coincides with  $E_Y$.
 In particular, $C_Y$ is the strict transform of $C_Z$ on $Y$. Thus, $C_Y$ is  a smooth rational curve and it can be easily checked that 
\[
C_Y^2=\frac{7}{2}-n.
\]
Because of our choice of $n$, we have $C_Y^2<0$.  We denote by  $f \colon Y \to X$  the contraction of $C_Y$. 
Summarising, we have the following diagram:
%\begin{figure}[t]
%\begin{tikzpicture}
%\draw (-6,2) -- (4,2);
%
%\draw[shift={(-4,2)}, rotate=-20] (0,0.7) -- (0,-1.5);
%
%
%\draw (-4,2) circle(2pt)[fill=black] node[anchor = south east]{$\mathit{Q_1}$};
%\draw (-2,2) circle(2pt)[fill=black] node[anchor = south east]{ $\mathit{Q_2}$};
%\draw (0,2) circle(2pt)[fill=black] node[anchor = south east]{$\mathit{Q_3}$};
%\draw (2,2) circle(2pt)[fill=black] node[anchor = south east]{$\mathit{Q_4}$};
%
%\draw (-4.8,1.3) node{$E_Y$};
%
%\draw (4,2) node[anchor = north east]{$C_Y$};
%\end{tikzpicture}
%\caption{Graph of curves on $(Y, C_Y + \frac{1}{2}E_Y)$}
%\end{figure}

\begin{center}
\begin{tikzcd}
	\overline{S} \arrow{r}{h} \arrow{d} & Y \arrow{r}{f} \arrow{d}{g} & X \\
	S \arrow{r} \arrow{d} & Z & \\
	\mathbb P_k^1 \times \mathbb P_k^1. & &
\end{tikzcd}
\end{center} 

By taking a pullback to $\overline{S}$, it is easy to check that
\[
K_{Y}+C_{Y}+\frac{1}{2}E_{Y} = g^*(K_Z+C_Z) \sim_{\mathbb{Q}} 0.
\]
Since $(Z,C_Z)$ is plt,  we have that also $(Y,C_Y+\frac{1}{2}E_{Y})$ is plt, and $Y$ is klt. 
%Therewith, the fact that $X$ is klt will follow from condition (2), after we prove it in Step 3.

\begin{step}
We now prove $(3)$. 
\end{step}

 Let $a \in \Q_{>0}$ be such that
\[
g^*C_Z = C_Y + aE_Y.
\]
Note that
\[
g^*C_Z-\delta E_Y=C_Y + (a-\delta)E_Y 
\]
is ample for any $0 < \delta \ll 1$, and so $f_*E_Y$ is ample. Since
\[
-(K_Y + C_Y) \sim_{\mathbb Q} \frac{1}{2}E_Y,
\]
we have that
\[
-K_X = -f_*(K_Y+C_Y) \sim_{\mathbb Q}  \frac{1}{2}f_*E_Y
\]
is ample.
Let $b\in \mathbb Q$ be such that 
\[
K_Y + bC_Y = f^*K_X.
\]
By intersecting with $C_Y$, we have 
\[
b = \frac{K_Y \cdot C_Y}{-C_Y^2} = 1 - \frac{-(K_Y+C_Y)\cdot C_Y}{-C_Y^2}. 
\]

Since $K_Y + C_Y + \frac{1}{2}E_Y \equiv 0$ and $C_Y \cdot E_Y > 0$, we have
\[
0 < -(K_Y + C_Y) \cdot C_Y \leq 2.
\]
Thus, because of our choice of $n$,  we obtain 
\[
0<1-b=\frac{-(K_Y+C_Y)\cdot C_Y}{-C_Y^2} \leq \frac{2}{n-\frac{7}{2}}<\epsilon.
\]
In particular, $X$ is klt and $(3)$ follows. 
\end{proof}

We can now prove our main theorem: 

\begin{proof}[Proof of Theorem~\ref{Intro-main}]
Fix an algebraically closed field $k$ of characteristic $p>0$. 
If $p=2$, then the smooth cubic surface of Fermat type satisfies the required properties 
\cite[Example~5.5]{hara98a}. 
Thus, we may assume $p \geq 3$.

Let $Q_1,\ldots, Q_4 \in \mathbb P^1_k$ be as in Lemma \ref{l_P1} and let $X,Y,Z$ and $b$ be as in Proposition \ref{p_4half} for some $0 < \epsilon \ll 1$. In particular, $X$ is klt. 
By $(1)$ and  $(2)$ of Proposition \ref{p_4half}  and Lemma \ref{l_adjunction}, it follows that the pair $(Z,C_Z)$ is not globally $F$-split. By Theorem \ref{t-fpure-threshold}, there exists a rational number $\beta$, independent of $\epsilon$, 
such that $0<\beta<1$ and the pair $(Z, \beta C_Z)$ is not globally $F$-split as well.

Thus, we may assume $\epsilon < 1 - \beta$, and in particular $b > \beta$. Since $(Z, \beta C_Z)$ is not globally $F$-split, Lemma~\ref{l-push-GFS} implies that $(Y, \beta C_{Y})$ is not globally $F$-split. Thus,  $(Y, bC_{Y})$ is not globally $F$-split and, by   Lemma  \ref{l-pull-GFS}, also $X$ is not globally $F$-split. 
\end{proof}

\section{Non-F-pure canonical threefolds}\label{s-3dim}

The goal of this section is to prove Theorem~\ref{Intro-main2}. 
We begin with the following Lemma. 

\begin{lemma}\label{l-canonical-check}
Let $k$ be an algebraically closed field of characteristic $p>5$. 
%where $p \equiv a \mathcal{O}d 3$ for $a\in \{0, 1, 2\}$. 
Let $f(x, y, z)\in k[x, y, z]$ be a homogeneous cubic polynomial such that 
${\Proj}\,k[x, y, z]/(f(x, y, z))$ is a smooth elliptic curve. 

Then, for every $n\in \Z_{\geq 0}$, 
\[
X_n=\Spec\,k[x, y, z, w]/(f(x, y, z)+w^n)
\]
has at most canonical singularities. 
\end{lemma}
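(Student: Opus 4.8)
The plan is to induct on $n$, blowing up the origin of $\mathbb{A}^4$ at each step; the key point will be that, in one affine chart of the blow-up, the strict transform of $X_n$ is again of the form $X_{n-3}$, while in the remaining charts it is smooth.

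I would first record two preliminary facts. Since $C:=\Proj k[x,y,z]/(f)$ is smooth and $p\neq 3$, the Euler relation $3f=xf_x+yf_y+zf_z$ shows that the ideal $(f_x,f_y,f_z)$ is $\mathfrak m$-primary in $k[x,y,z]$, and, more to the point, that there is no point $(b,c)\in\mathbb{A}^2$ with $f(1,b,c)=f_y(1,b,c)=f_z(1,b,c)=0$ (and symmetrically with $x$ replaced by $y$ or $z$); call this the \emph{Euler observation}. Second, $f+w^n$ is irreducible (elementary, e.g.\ via the irreducibility criterion for the binomial $w^n-a$ over $k(x,y,z)$ together with Gauss's lemma, using that $f$ is an irreducible form). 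Hence $X_n$ is an integral Gorenstein hypersurface, and by the Euler observation $\Sing X_n\subseteq\{0\}$, which has codimension three; so $X_n$ is normal and $K_{X_n}$ is Cartier. For $n\le 1$ the variety $X_n$ is smooth, so from now on $n\ge 2$.

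Now let $\pi\colon\tilde X_n\to X_n$ be the strict transform of $X_n$ under $\mathrm{Bl}_0\mathbb{A}^4\to\mathbb{A}^4$, with exceptional prime divisor $\mathcal E$. For $n\ge 3$ one has $\mathrm{mult}_0 X_n=3$, and an adjunction computation on the blow-up gives $K_{\tilde X_n}=\pi^*K_{X_n}$, so $\pi$ is crepant. I would then go through the four standard charts of $\tilde X_n$: in the chart dual to $w$ the equation becomes $f(x',y',z')+w^{n-3}$, so $\tilde X_n$ is there isomorphic to $X_{n-3}$, which has at most canonical singularities by the inductive hypothesis; in each of the charts dual to $x$, $y$, $z$ the equation is (after relabelling) $f(1,y',z')+u^{n-3}(w')^{n}$, and this is \emph{smooth}, because a singular point would force $f_y(1,\cdot,\cdot)=f_z(1,\cdot,\cdot)=0$, hence $f(1,\cdot,\cdot)\neq 0$ there by the Euler observation, so $u^{n-3}(w')^{n}$ is a unit at that point, contradicting the vanishing of $\partial_u$ or of $\partial_{w'}$ --- and it is here that $p>5$ is used, since if $p\mid n$ then $p\nmid n-3$ (so $\partial_u\neq 0$) while if $p\nmid n$ then $\partial_{w'}\neq 0$, the case $n=3$ being immediate. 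Away from $\mathcal E$ we have $\tilde X_n\cong X_n\smallsetminus\{0\}$, which is smooth. So $\tilde X_n$ has at most canonical singularities, and since $\pi$ is crepant and $K_{X_n}$ is Cartier, the same holds for $X_n$. The only base case needing work is $n=2$: there $\mathrm{mult}_0 X_2=2$, the exceptional divisor of the same blow-up has a strictly positive discrepancy, the $w$-chart is smooth, and the $x$-, $y$-, $z$-charts acquire only a transverse $A_1$-singularity along a copy of $C$ (analytically $(A_1\text{-surface})\times\mathbb{A}^1$, hence canonical); so $\tilde X_2$, and therefore $X_2$, is canonical, completing the induction.

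The main obstacle is this chart analysis: recognizing one chart as $X_{n-3}$ (which is what lets the induction close) and, above all, proving the three remaining charts are smooth. That is precisely the place where the smoothness of the elliptic curve $C$ enters, through the Euler observation, and where the hypothesis on the characteristic is needed to control the mixed monomial $u^{n-3}(w')^{n}$. The remaining ingredients --- irreducibility and normality of $X_n$, the discrepancy bookkeeping along the crepant blow-up, and the $cA_1$ description when $n=2$ --- are routine.
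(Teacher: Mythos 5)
Your argument is correct, and for $n\ge 3$ it coincides with the paper's: both show $X_n$ is smooth away from the origin via the Euler relation, blow up the origin, observe multiplicity three so the blow-up is crepant, identify the $w$-chart with $X_{n-3}$, and prove the $x$-, $y$-, $z$-charts smooth using the smoothness of the elliptic curve. (A small calibration: for the chart-smoothness step you invoke $p>5$, but $p>3$ already suffices, since $p\mid n$ and $p\mid n-3$ would force $p\mid 3$; the paper instead restricts immediately to the exceptional locus $\{x=0\}$, which removes even this bookkeeping.) Where you genuinely diverge is the base case $n=2$. The paper puts $f$ in Weierstrass form, cuts $X_2$ by the hyperplane $\{x=0\}$ to produce a $D_4$ du Val surface $S$, invokes Hara's strong $F$-regularity of $S$ for $p>5$ and Das's inversion of adjunction to conclude $(X_2,S)$ is plt, hence $X_2$ terminal. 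You instead blow up the origin once more, observe the multiplicity is now two (so the induced exceptional divisor has strictly positive discrepancy --- in fact $2$, as $\mathrm{div}(x)|_{\tilde X_2}=2E_0$ in the $x$-chart), check the $w$-chart is smooth, and identify the singular locus of the remaining charts as a copy of $C$ along which the equation $xf(1,y',z')+w'^2$ is, after the coordinate change $u=f(1,y',z')$, a rank-three quadric $xu+w'^2=0$, i.e.\ a $cA_1$ singularity; since $A_1\times\mathbb A^1$ is canonical, so is $\tilde X_2$ and hence $X_2$. Your route is more elementary and self-contained (it avoids the $F$-regularity and inversion-of-adjunction inputs entirely, and in fact only needs $p>3$); the paper's route is shorter to state and directly yields the stronger conclusion that $X_2$ is terminal, though that is not needed for the lemma. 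Your preliminary remarks on irreducibility and normality of $X_n$, which the paper leaves implicit, are a sensible addition.
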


\begin{proof} We divide the proof into three steps:
\setcounter{step}{0}

\begin{step}\label{step-origin}
We first show that $X_n$ is smooth outside the origin and, in particular,  $X_n$ is smooth if $n\le 1$. 
\end{step}

%\begin{proof}[Proof of Step~\ref{step-origin}] 
Let $(a, b, c, d) \in X_n \subset \mathbb A^4_k$ be a singular point.
Since $f$ is homogeneous of degree $3$, we have 
\[
x\frac{\partial f}{\partial x}+y\frac{\partial f}{\partial y}+z\frac{\partial f}{\partial z}=3f.
\]
Since $p>3$, a singular point $(a, b, c, d)$ satisfies the following equation: 
\[
f(a, b, c)=\frac{\partial f}{\partial x}(a, b, c)=\frac{\partial f}{\partial y}(a, b, c)=\frac{\partial f}{\partial z}(a, b, c)=0.
\]
This implies $a=b=c=0$, hence $d=0$, as claimed. 
%\end{proof}

\begin{step}\label{step-X2} We now show that  $X_2$ is terminal. 
\end{step}

%\begin{proof}[Proof of Step~\ref{step-X2}]
%By Step~\ref{step-origin}, we may assume that $n=2$, hence 
%$$X_2=\Spec\,k[x, y, z, w]/(f(x, y, z)+w^2).$$
Since $p>3$, by writing the cubic $f$ in  Weierstrass form, we may assume that
there exist $a,b\in k$ such that
\[
f(x, y, z)+w^2=y^2z+x^3+axz^2+bz^3+w^2.
\]
After taking a linear transformation $x\mapsto x+cz$ for some $c \in k$, 
we may write 
\[
f(x, y, z)+w^2=y^2z+x^3+\alpha x^2z+\beta xz^2+\gamma z^3+w^2
\]
for some $\alpha,\beta,\gamma\in k$ with $\gamma \neq 0$. 
Consider the hypersurface $S$ in $X_2$ obtained by cutting with the hyperplane 
$\{x=0\}$, i.e. 
\[
S=\Spec\,k[y, z, w]/(y^2z+\gamma z^3+w^2).
\]
Then $S$ admits a unique singularity which is  du Val  of type $D_4$, and in particular $S$ is canonical. 
Since $p>5$, $S$ is strongly $F$-regular \cite[Theorem 1.1]{hara98} and inversion of adjunction \cite[Theorem A]{Das15} implies  that the pair $(X_2, S)$ is plt. 
Since $S$ is a Cartier divisor, it follows that $X_2$ is terminal, as claimed.

\begin{step}\label{step-blowup}
We now show that $X_n$ is canonical for all $n\in \Z_{\ge 0}$. 
\end{step}

We may assume $n \geq 3$. 
Consider the blow-up $f\colon Y \to X_n$ at the origin. 
Then $Y$ is covered by four affine open subsets: 
\begin{eqnarray*}
Y&=&\Spec\,k[x, y, z, w]/(f(1, y, z)+x^{n-3}w^{n})\\
&\cup&\Spec\,k[x, y, z, w]/(f(x, 1, z)+y^{n-3}w^{n})\\ 
&\cup& \Spec\,k[x, y, z, w]/(f(x, y, 1)+z^{n-3}w^{n})\\
&\cup& \Spec\,k[x, y, z, w]/(f(x, y, z)+w^{n-3}).
\end{eqnarray*}
We first  show that the first three affine open subsets are smooth. 
By  symmetry, it is enough to consider the first one. By Step 1,  
we just need to consider points in the exceptional locus $\{x=0\}$. 
Since $n \geq 3$,  a singular point $(x,y,z,w)$ satisfies the  equations: 
\[
f(1, y, z)=\frac{\partial f(1, y, z)}{\partial y}=\frac{\partial f(1, y, z)}{\partial z}=0.
\]
Since $\Spec\,k[y, z]/(f(1, y, z))$ is a smooth affine curve, there are no such points. 
Thus, the singular locus of $Y$ is contained in the last affine open subset 
\[
\Spec\,k[x, y, z, w]/(f(x, y, z)+w^{n-3}) \simeq X_{n-3}.
\]

Moreover, the multiplicity of $X_n$ at the origin is three. Thus, $f$ is  crepant, i.e.   
\[
K_Y=f^*K_{X_n}.
\]
Thus, it is enough to show that $X_{n-3}$ is canonical. Repeating the same procedure finitely many times, 
Step 1 and Step 2 imply the claim. 
This completes the proof of the Lemma. 
\end{proof}

\begin{lemma}\label{l-Fpure-check}
Let $k$ be an algebraically closed field of characteristic ${p>0}$. 
Let $f(x, y, z)\in k[x, y, z]$ be a homogeneous cubic polynomial such that 
${\Proj}\,k[x, y, z]/(f(x, y, z))$ is a smooth elliptic curve.

Let $n \in \Z$ be such that $n \geq p$, and let 
\[
X_n=\Spec\,k[x, y, z, w]/(f(x, y, z)+w^n).
\]
Then, $X_n$ is $F$-pure if and only if 
${\Proj}\,k[x, y, z]/(f(x, y, z))$ is an ordinary elliptic curve. 
\end{lemma}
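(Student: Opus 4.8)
The plan is to use Fedder's criterion. Write $S:=k[x,y,z,w]$, $g:=f(x,y,z)+w^n\in S$, and $R:=S/(g)$, so that $X_n=\Spec R$. Note that $R$ carries an $\mathbb N$-grading with $\deg x=\deg y=\deg z=n$ and $\deg w=3$, and $R_0=k$; moreover $R$ is a normal domain, since it is a hypersurface (hence satisfies $S_2$) and its singular locus has codimension at least two — indeed that locus is contained in the zero set of $\partial_x f,\partial_y f,\partial_z f$, and these three forms cannot vanish along a curve in $\mathbb P^2_k$ because $\Proj k[x,y,z]/(f)$ is smooth, so $R$ is also $R_1$. Since the $F$-pure locus of an $F$-finite scheme is open and stable under the induced $\mathbb G_m$-action, while the vertex (the point cut out by the irrelevant ideal $\mathfrak m:=(x,y,z,w)$) lies in the closure of every $\mathbb G_m$-orbit, it follows that $X_n$ is $F$-pure if and only if $R$ is $F$-pure at $\mathfrak m$.

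By Fedder's criterion, $R$ is $F$-pure at $\mathfrak m$ if and only if $g^{p-1}\notin\mathfrak m^{[p]}=(x^p,y^p,z^p,w^p)$ in $S$. Expanding,
\[
g^{p-1}=\sum_{j=0}^{p-1}\binom{p-1}{j}f^{p-1-j}w^{nj},
\]
and for every $j\geq 1$ we have $nj\geq n\geq p$, so $w^{nj}\in(w^p)$; hence $g^{p-1}\equiv f^{p-1}\pmod{\mathfrak m^{[p]}}$, and the criterion reduces to $f^{p-1}\notin(x^p,y^p,z^p)$ inside $k[x,y,z]$. Now $f^{p-1}$ is homogeneous of degree $3(p-1)=(p-1)+(p-1)+(p-1)$, so the only monomial that can occur in it without lying in $(x^p,y^p,z^p)$ is $x^{p-1}y^{p-1}z^{p-1}$. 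Therefore $X_n$ is $F$-pure if and only if the coefficient of $(xyz)^{p-1}$ in $f^{p-1}$ is nonzero. This coefficient is, up to a nonzero scalar, the Hasse invariant of $E:=\Proj k[x,y,z]/(f)$ — equivalently, it is exactly Fedder's criterion applied to the two-dimensional cone $\Spec k[x,y,z]/(f)$ at its vertex — and it is classical that it is nonzero precisely when $E$ is an ordinary elliptic curve, which gives the desired equivalence.

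The step I expect to be the main obstacle is the first one — namely justifying that $F$-purity of $X_n$ can be tested at the vertex (which rests on normality of $X_n$ together with the $F$-pure locus being open and $\mathbb G_m$-invariant) — and, in the last step, the classical identification of the coefficient of $(xyz)^{p-1}$ in $f^{p-1}$ with the Hasse invariant of $E$, hence with ordinarity of $E$; both are standard but deserve some care. By contrast, the Fedder computation itself is immediate once the hypothesis $n\geq p$ is used to push every term of $g^{p-1}$ involving $w$ into $(w^p)$.
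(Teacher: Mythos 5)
Your proof is correct and follows essentially the same route the paper takes: reduce $F$-purity of $X_n$ to the Fedder condition $g^{p-1}\notin(x^p,y^p,z^p,w^p)$ at the vertex, use $n\geq p$ to kill every $w$-term and reduce to $f^{p-1}\notin(x^p,y^p,z^p)$, and then invoke the classical identification of that condition (equivalently, nonvanishing of the coefficient of $(xyz)^{p-1}$ in $f^{p-1}$) with ordinarity of the plane cubic; the paper compresses all of this into ``it is well known that (1), (2), (3) are equivalent and (a), (b) are equivalent'' plus ``since $n\geq p$, it is easy to check that (3) and (b) are equivalent,'' while you supply the details. One minor remark on your side comment about normality of $X_n$: bounding $\mathrm{Sing}(X_n)$ by $V(\partial_x f,\partial_y f,\partial_z f)\subset\mathbb A^4$ alone only gives codimension $\geq 1$ (that locus is a finite union of planes $L\times\mathbb A^1_w$ with $L$ a line through the origin in $\mathbb A^3$), so you also need to intersect with $\{g=0\}$ and observe that $g$ cannot vanish identically on any such plane (else the line $L$ would lie in $V(f)\cap V(\partial f)=\{0\}$); this is a side remark and does not affect the main argument.
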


\begin{proof}
We consider the following conditions. 
\begin{enumerate}
\item{${\Proj}\,k[x, y, z]/(f(x, y, z))$ is an ordinary elliptic curve. }
\item{${\Spec}\,k[x, y, z]/(f(x, y, z))$ is $F$-pure. }
\item{$f(x, y, z)^{p-1} \not\in (x^p, y^p, z^p)k[x, y, z].$}
\end{enumerate}
\begin{enumerate}
\item[(a)]{$X_n$ is $F$-pure.}
\item[(b)]{$(f(x, y, z)+w^n)^{p-1} \not\in (x^p, y^p, z^p, w^p)k[x, y, z, w].$}
\end{enumerate}
It is well known that (1), (2) and (3) (respectively, (a) and (b)) are equivalent. Since, by assumption: 
$n\ge p$, it is easy to check that (3) and (b) are equivalent. Thus, the claim follows.
\end{proof}

%\begin{theorem}\label{t-counter-example}
%Let $k$ be an algebraically closed field of characteristic $p>0$. 
%Then there exists a canonical threefold $X$ over $k$ which is not $F$-pure. 
%\end{theorem}

We can now prove Theorem~\ref{Intro-main2}.

\begin{proof}[Proof of Theorem~\ref{Intro-main2}]
If $p \leq 5$, then 
\[
X:=\Spec k[x, y, z, w]/(x^2+y^3+z^5)
\] is canonical but not $F$-pure \cite[(4.4)]{hara98}.  

Thus, we may assume $p>5$. 
Let $f(x, y, z)\in k[x, y, z]$ be a homogeneous cubic polynomial such that 
${\Proj}\,k[x, y, z]/(f(x, y, z))$ is a supersingular elliptic curve. 
Let 
\[
X:=k[x, y, z, w]/(f(x, y, z)+w^p).
\]
By Lemma~\ref{l-canonical-check}, $X$ is canonical. 
By Lemma~\ref{l-Fpure-check}, $X$ is not $F$-pure. 
\end{proof}

%%%%%%%%%%%%%%%%%%%%%%%%%%%%%%%%%%%%%%%%%%%%%%%%%%%%%

\bibliographystyle{amsalpha}
\bibliography{Library}

\end{document}